\documentclass[draft]{article} 
\usepackage{amsmath,amssymb,amsthm}

\newtheorem{thm}{Theorem}[section]
\newtheorem{lem}[thm]{Lemma}
\newtheorem{prop}[thm]{Proposition}
\theoremstyle{remark}
\newtheorem{rem}[thm]{Remark}
\theoremstyle{definition}

\begin{document}

\title{Certain identities on derivatives 
of radial homogeneous 
and logarithmic functions}

\author{Kei Morii 
\thanks{Department of Mathematics, Kyoto University, Kyoto, 606-8502, Japan. \protect \\
\textit{E-mail address:} \texttt{k-morii@math.kyoto-u.ac.jp}}, 
\and
        Tokushi Sato 
\thanks{Mathematical Institute, Tohoku University, Sendai, 980-8578, Japan. \protect \\
\textit{E-mail address:} \texttt{tokushi@math.tohoku.ac.jp}}, 
\and
        Yoshihiro Sawano 
\thanks{Department of Mathematics, Kyoto University, Kyoto, 606-8502, Japan. \protect \\
\textit{E-mail address:} \texttt{yosihiro@math.kyoto-u.ac.jp}}, 
}
\allowdisplaybreaks

\maketitle

\begin{abstract}
Let $k$ be a natural number and $s$ be real. 
In the 1-dimensional case, 
the $k$-th order derivatives 
of the functions $\lvert x\rvert^s$ 
and $\log \lvert x\rvert$ 
are multiples of $\lvert x\rvert^{s-k}$ 
and $\lvert x\rvert^{-k}$, 
respectively. 
In the present paper,
we generalize this fact to higher dimensions
by introducing a suitable norm
of the derivatives, 
and give the exact values 
of the multiples. 
\par
{\it AMS subject classification:} 
05A05, 
05A10, 
26B99 
\par
{\it Keywords:} 
radial homogeneous 
function, radial logarithmic function,  
norm of derivatives in higher dimensions, 
exact value
\end{abstract}

\section{Introduction}
\label{sect:intro}
\par
In the present paper, 
we show two identities 
for derivatives 
of radial homogeneous functions 
and a radial logarithmic function. 
A logarithm $\log r$ always stands for 
the natural logarithm $\log_e r$. 
Let $k\in\mathbb{N}=\{1,2,\ldots\}$ and $s\in\mathbb{R}$. 
In the 1-dimensional case, 
we readily have that 
the functions $(d/dx)^k[\lvert x\rvert^s]$, 
$(d/dx)^k[\log\lvert x\rvert]$ 
are homogeneous of degree $s-k$, 
$-k$, 
respectively. 
Precisely we have 
\begin{gather}
\label{eq:1dim_pwr}
\lvert x\rvert^{k-s}
\left\lvert
\left(
\frac{d}
     {dx}
\right)^k
[
\lvert x\rvert^s
]
\right\rvert
=
\left\lvert
(s)_k
\right\rvert
,\;
\lvert x\rvert^k
\left\lvert
\left(
\frac{d}
     {dx}
\right)^k
\left[
\log
\lvert x\rvert
\right]
\right\rvert
=
(k-1)!
\\\notag
\text{for\;}
x\in\mathbb{R}\setminus\{0\}.
\end{gather}
Here we use the Pochhammer symbol 
for the falling factorial (lower factorial);
$$
(\nu)_k
=
\begin{cases}
\displaystyle
\prod_{j=0}^{k-1}
(\nu-j)
&
\text{for\;}
\nu\in\mathbb{R},\,
k\in\mathbb{N},
\\
1
&
\text{for\;}
\nu\in\mathbb{R},\,
k=0.
\end{cases}
$$
We denote the space dimension 
by $N\in\mathbb{N}$. 
Let $\nabla^k$ be 
a partial differential operator on $\mathbb{R}^N$ 
which contains only $k$-th order derivatives. 
Then the functions 
$\nabla^k[\lvert x\rvert^s]$, 
$\nabla^k[\log\lvert x\rvert]$ 
for $x\in \mathbb{R}^N\setminus\{0\}$ 
are also homogeneous of degree $s-k$, 
$-k$, 
respectively. 
However, 
it is not trivial 
whether the functions
\begin{equation}
\label{eq:cnst_fn}
\lvert x\rvert^{k-s}
\lvert \nabla^k[\lvert x\rvert^s]\rvert,
\, 
\lvert x\rvert^k
\lvert \nabla^k[\log\lvert x\rvert]\rvert
\end{equation}
are constants or not. 
It deeply depends on 
the definition of the norm $\lvert \nabla^k u(x)\rvert$ 
of the vector $\nabla^k u(x)$ 
for a smooth function $u$ 
defined on a domain in $\mathbb{R}^N$. 
See Remark \ref{rem:bad_example} below 
for a counterexample.
\par
In the present paper, 
we shall define an appropriate norm 
of the vector $\nabla^k u(x)$ 
to solve this problem affirmatively, 
and specify the constants in (\ref{eq:cnst_fn}). 
\par
In what follows, 
we specify the dimension $N$ as a sub- or super-script 
and denote by 
$\lvert\,\cdot\,\rvert_N$ the Euclidean norm on $\mathbb{R}^N$; 
$$ 
\lvert x\rvert_N
=
(x_1^2+x_2^2+\cdots+x_N^2)^{1/2}
\text{\;\;for\;}
x=(x_1,x_2,\ldots,x_N)\in\mathbb{R}^N.
$$
Let us write $I_N=\{1,2,\ldots,N\}$ for short. 
For a $k$-tuple of indices $i=(i_1,i_2,\ldots,i_k)\in I_N^k$, 
define the $k$-th order partial differential operator $D_i$ 
as
$$
D_i
=
D_{i_1} 
D_{i_2} 
\cdots
D_{i_k} 
=
\frac{\partial}
     {\partial x_{i_1}}
\;\!
\frac{\partial}
     {\partial x_{i_2}}
\cdots
\frac{\partial}
     {\partial x_{i_k}}. 
$$
For a smooth real-valued function $u$ 
on a domain $\Omega$ in $\mathbb{R}^N$, 
define the vector 
$$
\nabla_{\!N}^k u(x)
=
(D_i u(x))_{i\in I_N^k}
\text{\;\;for\;}
x\in\Omega
$$
and its norm as 
\begin{align*}
\lvert 
\nabla_{\!N}^k u(x)
\rvert_{N^k}
&=
\left(
\sum_{i\in I_N^k}
  (D_i u(x))^2
\right)^{1/2}
\\
&=
\left(
\sum_{i_1=1}^N
\sum_{i_2=1}^N
\cdots
\sum_{i_k=1}^N
  \left(
    \frac{\partial}
         {\partial x_{i_1}}
    \frac{\partial}
         {\partial x_{i_2}}
\cdots
    \frac{\partial}
         {\partial x_{i_k}}
    u(x)
  \right)^2
\right)^{1/2}
\text{\;\;for\;}
x\in\Omega;
\end{align*}
we make the agreement $\nabla_{\!N}^0 u(x)=u(x)$ 
and then $\lvert\nabla_{\!N}^0 u(x)\rvert_1=\lvert u(x)\rvert$. 
When $k=1$, 
$\nabla_N^1 u(x)$ coincides 
with the gradient vector of $u(x)$, 
and $\lvert\nabla_N^1 u(x)\rvert_N$ is its Euclidean norm. 
When $k=2$, 
$\nabla_N^2 u(x)$ can be identified 
with the Hessian matrix of $u(x)$, 
and $\lvert\nabla_N^2 u(x)\rvert_{N^2}$ is its Frobenius norm. 
Then we have the following results. 
Let $\mathbb{Z}_+=\{0\}\cup\mathbb{N}=\{0,1,2,\ldots\}$. 
%
\par
\begin{thm}
\label{thm:derivative_log_abs}
Let $N\in\mathbb{N}$. 
\begin{itemize}
\item[{\rm (i)}]
For any 
$k\in\mathbb{Z}_+$ and $s\in\mathbb{R}$, 
there exists a constant $\gamma_N^{s,k}\ge 0$ 
such that 
$$
(
\lvert x\rvert_N^{k-s}
\lvert
\nabla_{\!N}^k
[
\lvert x\rvert_N^s
]
\rvert_{N^k}
)^2
=
\gamma_N^{s,k}
\text{\;\;for\/\;}
x\in\mathbb{R}^N\setminus\{0\}.
$$
\item[{\rm (ii)}]
For any $k\in\mathbb{N}$, 
there exists a constant $\ell_N^k>0$ 
such that 
$$
(
\lvert x\rvert_N^k
\lvert
\nabla_{\!N}^k
[
\log
\lvert x\rvert_N
]
\rvert_{N^k}
)^2
=
\ell_N^k
\text{\;\;for\/\;}
x\in\mathbb{R}^N\setminus\{0\}.
$$
\end{itemize}
\end{thm}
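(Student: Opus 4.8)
The plan is to reduce both assertions to two structural properties of the squared quantity
$g(x):=\lvert\nabla_{\!N}^k u(x)\rvert_{N^k}^2=\sum_{i\in I_N^k}(D_i u(x))^2$,
namely that $g$ is \emph{homogeneous} in $x$ and that $g$ is \emph{invariant under the orthogonal group} $O(N)$. Granting these, the theorem is immediate. In case (i) the function $h(x):=\lvert x\rvert_N^{2(k-s)}g(x)$ is homogeneous of degree $0$ (hence independent of $\lvert x\rvert_N$) and $O(N)$-invariant (hence independent of the direction $x/\lvert x\rvert_N$), so $h$ is constant on $\mathbb{R}^N\setminus\{0\}$; this constant is $\gamma_N^{s,k}$. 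The same reasoning with $\lvert x\rvert_N^{2k}g(x)$ settles (ii).

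First I would record the homogeneity. In case (i), $u(x)=\lvert x\rvert_N^s$ is homogeneous of degree $s$, so each component $D_i u$ with $i\in I_N^k$ is homogeneous of degree $s-k$, whence $g$ is homogeneous of degree $2(s-k)$. In case (ii), $u(x)=\log\lvert x\rvert_N$ satisfies $u(\lambda x)=u(x)+\log\lambda$ for $\lambda>0$; differentiating $k\ge 1$ times annihilates the additive constant and produces a factor $\lambda^{-k}$, so each $D_i u$ is homogeneous of degree $-k$ and $g$ is homogeneous of degree $-2k$.

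The main step, and the principal obstacle, is the $O(N)$-invariance of $g$. Here I would use that $u$ is radial, so that $u\circ R=u$ for every $R\in O(N)$. Setting $v=u\circ R$ and applying the chain rule, the array of $k$-th order derivatives transforms as a $k$-tensor,
$$
D_i v(x)=\sum_{j\in I_N^k} R_{j_1 i_1}\cdots R_{j_k i_k}\,(D_j u)(Rx).
$$
Squaring and summing over $i\in I_N^k$, the orthogonality relations $\sum_{i_m=1}^N R_{j_m i_m}R_{j_m' i_m}=\delta_{j_m j_m'}$ collapse each factor to the diagonal $j=j'$, leaving $\sum_i (D_i v(x))^2=\sum_j ((D_j u)(Rx))^2=g(Rx)$. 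Since $v=u$, this reads $g(x)=g(Rx)$ for all $R\in O(N)$, which is the desired invariance. This is exactly where the Frobenius-type norm $\lvert\,\cdot\,\rvert_{N^k}$ is essential: the sum of the squares of \emph{all} components is precisely the scalar preserved by the tensorial action of $O(N)$, whereas other norms of $\nabla_{\!N}^k u$ need not be invariant (cf.\ Remark~\ref{rem:bad_example}).

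It remains to handle the signs of the constants. Non-negativity of $\gamma_N^{s,k}$ is automatic since $g\ge 0$. For the strict inequality $\ell_N^k>0$ I would argue that $g\not\equiv 0$: the function $\log\lvert x\rvert_N$ is real-analytic on $\mathbb{R}^N\setminus\{0\}$ and is not a polynomial of degree less than $k$, so its $k$-th order derivatives cannot all vanish; together with the homogeneity of degree $-2k$ this forces the constant to be strictly positive.
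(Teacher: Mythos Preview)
Your proof is correct and takes a genuinely different route from the paper's. Both arguments reduce the claim to showing that $g(x)=\lvert\nabla_{\!N}^k u(x)\rvert_{N^k}^2$ is homogeneous and radially symmetric, but the paper establishes radial symmetry indirectly: it proves a lemma (Lemma~\ref{lem:rad}) stating that for real-valued radial $u\in\mathcal{S}(\mathbb{R}^N)$ the function $g$ is radial, via the Fourier inversion formula and the identity $\sum_{i\in I_N^k}\xi_{i_1}\cdots\xi_{i_k}\eta_{i_1}\cdots\eta_{i_k}=(\xi,\eta)_N^k$; since $\lvert x\rvert_N^s$ and $\log\lvert x\rvert_N$ are not Schwartz, a further cutoff argument with functions $\psi_j(\lvert x\rvert_N)$ is then needed to transfer the conclusion. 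Your chain-rule computation, showing that the $k$-th derivative array transforms as a $k$-tensor under $O(N)$ and that the sum of squares of all components is therefore orthogonally invariant, is more elementary, avoids Fourier analysis and the cutoff entirely, and makes transparent exactly why the particular norm $\lvert\,\cdot\,\rvert_{N^k}$ (and not, say, the one in Remark~\ref{rem:bad_example}) is the right choice. The paper's approach packages the invariance into the single rotation-invariant scalar $(\xi,\eta)_N^k$ on the Fourier side, which is pleasant but buys nothing extra for this particular theorem. You also address the strict positivity $\ell_N^k>0$ directly, whereas the paper's proof of Theorem~\ref{thm:derivative_log_abs} does not treat this point explicitly (it follows a posteriori from the explicit formulas in Theorem~\ref{thm:ind}).
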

\par
It follows from (\ref{eq:1dim_pwr}) that 
for any $k\in\mathbb{N}$ and $s\in\mathbb{R}$, 
\begin{equation}
\label{eq:1-dim_result}
\gamma_1^{s,k}
=
((s)_k)^2,
\;
\ell_1^k
=
((k-1)!)^2.
\end{equation}
We can determine explicitly 
the constants $\gamma_N^{s,k}$ and $\ell_N^k$ 
given in Theorem \ref{thm:derivative_log_abs} 
for a general dimension $N$ 
as follows. 
Before we go into the detail, 
we provide some notation. 
Let 
$$
\lfloor \nu\rfloor
=
\max
\{k\in\mathbb{Z};\,k\le\nu\}
,\;
\lceil \nu\rceil
=
\min
\{k\in\mathbb{Z};\,k\ge\nu\}
\text{\;\;for\;}
\nu\in\mathbb{R}.
$$
Define the binomial coefficient 
$$
{\nu\choose k}
=
\frac{(\nu)_k}
     {k!}
\text{\;\;for\;}
\nu\in\mathbb{R}
,\,
k\in\mathbb{Z}_+.
$$
%
\par
The following theorem provides 
the explicit values of the constants $\gamma_N^{s,k}$ and $\ell_N^k$.
\par
\begin{thm}
\label{thm:ind}
Let $N\in\mathbb{N}$. 
\begin{itemize}
\item[{\rm (i)}]
For any $k\in\mathbb{Z}_+$ 
and $s\in\mathbb{R}$,  
it holds
\begin{align*}
\gamma_N^{s,k}
&=
k!
\sum_{l=0}^{\lfloor k/2\rfloor}
(k-2l)!\,
l!\,
\left(
\frac{N-3}
     {2}
+l
\right)_l
\\*
&\qquad{}\times
\left(
\sum_{n=\lceil k/2\rceil}^{k-l}
2^{2n-k+l}
{s/2 \choose n}
{n \choose k-n}
{k-n\choose l}
\right)^2.
\end{align*}
\item[{\rm (ii)}]
For any $k\in\mathbb{N}$, 
it holds
\begin{align*}
\ell_N^k
&=
k!
\sum_{l=0}^{\lfloor k/2\rfloor}
(k-2l)!\,
l!\,
\left(
\frac{N-3}
     {2}
+l
\right)_l
\\*
&\qquad{}\times
\left(
\sum_{n=\lceil k/2\rceil}^{k-l}
2^{2n-k+l}
\frac{(-1)^n}
     {2n}
{n \choose k-n}
{k-n\choose l}
\right)^2.
\end{align*}
\end{itemize}
\end{thm}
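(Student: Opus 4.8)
The plan is to reduce both parts to a single tensor computation and then diagonalise the resulting quadratic form. Write $t=\lvert x\rvert_N^2=\sum_j x_j^2$ and observe that $\lvert x\rvert_N^s=g(t)$ with $g(t)=t^{s/2}$, while $\log\lvert x\rvert_N=g(t)$ with $g(t)=\tfrac12\log t$. The decisive simplification is that $t$ is a quadratic polynomial: $\partial_i t=2x_i$, $\partial_i\partial_j t=2\delta_{ij}$, and all higher derivatives vanish. Applying the multivariate Fa\`a di Bruno formula to $D_i[g(t)]$ for $i=(i_1,\dots,i_k)$, only the set partitions of $\{1,\dots,k\}$ into blocks of size one and two survive, and first I would record the expansion
$$
D_i[g(t)]=\sum_{l=0}^{\lfloor k/2\rfloor}2^{k-l}\,g^{(k-l)}(t)\,(M_l)_i,
$$
where $(M_l)_i=\sum_{\pi}\prod_{\{a,b\}\in\pi}\delta_{i_ai_b}\prod_{c}x_{i_c}$ is the symmetric sum over partial matchings $\pi$ of $\{1,\dots,k\}$ with $l$ pairs and $k-2l$ singletons. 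On setting $n=k-l$ this already reproduces \eqref{eq:1-dim_result} when $N=1$, which serves as a sanity check.

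Next I would compute the squared norm by contracting this expansion with itself over $i\in I_N^k$. The key point is that $\langle M_a,M_b\rangle:=\sum_{i\in I_N^k}(M_a)_i(M_b)_i$ is evaluated by overlaying the two matchings on $\{1,\dots,k\}$: each position carries exactly one leg from each matching, so the overlay decomposes into cycles and paths. Each cycle of Kronecker deltas contracts to a free trace $\delta_{ii}=N$, and each path joins two $x$-legs and contracts to $\sum_i x_i^2=t$. This gives $\langle M_a,M_b\rangle=t^{\,k-a-b}\,W_{a,b}(N)$, where $W_{a,b}(N)=\sum_{\pi,\pi'}N^{C(\pi,\pi')}$ sums over overlaid matchings weighted by the number of cycles $C(\pi,\pi')$. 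Substituting, every $t$-power combines with $\lvert x\rvert_N^{2(k-s)}$ to a constant, exactly as forced by Theorem~\ref{thm:derivative_log_abs}, and the normalised norm becomes a quadratic form $\sum_{a,b}\lambda_a\lambda_b\,W_{a,b}(N)$ in the radial coefficients $\lambda_n$ attached to $M_{k-n}$.

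The crux is to diagonalise this quadratic form. I would introduce the orthogonal family $\{B_l\}_{l=0}^{\lfloor k/2\rfloor}$ obtained by splitting each $M_{l'}$ into its trace components, i.e.\ decomposing the symmetric tensors into trace-free (harmonic) pieces, so that $M_{l'}=\sum_{l=0}^{l'}\binom{l'}{l}c_l\,B_l$ and $\langle B_l,B_{l'}\rangle=0$ for $l\ne l'$. Establishing this orthogonality and evaluating the diagonal norms $\lVert B_l\rVert^2=k!\,(k-2l)!\,l!\,(\tfrac{N-3}{2}+l)_l$ is precisely where the arithmetic factor $(\tfrac{N-3}{2}+l)_l$ is produced, and I expect this to be the main obstacle: it requires either a closed form for the cycle-counting polynomial $W_{a,b}(N)$ together with a binomial-sum identity, or an inductive evaluation of the harmonic norms. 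Granting it, reindexing $n=k-l'$ turns the coefficient of $B_l$ into $\sum_{n}\binom{k-n}{l}\,(\text{const})\,\lambda_n$ over $\lceil k/2\rceil\le n\le k-l$ (the constraint $l'\ge l$ giving exactly the stated upper limit and $\binom{l'}{l}=\binom{k-n}{l}$), and orthogonality collapses the quadratic form into the advertised single sum of squares.

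Finally I would specialise $g$. For part~(i), $g(t)=t^{s/2}$ gives $2^{k-l}g^{(k-l)}(t)=2^{k-l}(s/2)_{k-l}\,t^{s/2-(k-l)}$, and writing $(s/2)_n=n!\binom{s/2}{n}$ produces the factor $\binom{s/2}{n}$; collecting this with the powers of two coming from $\partial t=2x$, the trace-splitting constants $c_l$, and the diagonalisation yields $2^{2n-k+l}\binom{s/2}{n}\binom{n}{k-n}\binom{k-n}{l}$ and hence the first formula. For part~(ii), $g(t)=\tfrac12\log t$ gives $2^{k-l}g^{(k-l)}(t)=\pm\,2^{k-l-1}(k-l-1)!\,t^{-(k-l)}$, and the same bookkeeping replaces $\binom{s/2}{n}$ by $(-1)^n/(2n)$, the overall sign being immaterial after squaring; this gives the second formula.
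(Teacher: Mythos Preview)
Your Fa\`a di Bruno expansion into the matching tensors $M_l$ is correct, and the triangularity you need for the change of basis (so that the coefficient of $B_l$ involves only $M_{l'}$ with $l'\ge l$, forcing the upper limit $n\le k-l$) is exactly right. But identifying the diagonalising family $\{B_l\}$ with the \emph{harmonic} (trace-free) decomposition is a genuine error, not just an uncompleted step. Already for $k=2$, with $M_0=x_ix_j$ and $M_1=\delta_{ij}$, the harmonic splitting reads $M_0=(x_ix_j-\tfrac{t}{N}\delta_{ij})+\tfrac{t}{N}\delta_{ij}$, so $M_0$ has a nonzero component along the trace piece $B_1$, contradicting your ansatz $M_{l'}=\sum_{l\le l'}\binom{l'}{l}c_l B_l$. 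Worse, the harmonic $l=0$ block carries squared norm $(N-1)/N$ on the unit sphere and squared coefficient $s^2(s-2)^2$, whereas the theorem's $l=0$ term is the $N$-independent quantity $s^2(s-1)^2$; so even if you computed the harmonic norms correctly you would arrive at a different (equivalent but differently organised) closed form, not the one stated.

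The orthogonal family that actually yields the theorem is the \emph{radial/tangential} splitting: with $P^\perp_{ij}=\delta_{ij}-x_ix_j/\lvert x\rvert_N^2$ one takes $B_l$ to be the symmetrisation of $(P^\perp)^{\otimes l}\otimes x^{\otimes(k-2l)}$, so that for $k=2$ one has $B_0=M_0$, $B_1=M_1-M_0=P^\perp$; then on $\lvert x\rvert_N=1$ these are orthogonal with $\lVert B_0\rVert^2=1$, $\lVert B_1\rVert^2=N-1$, and $s(s-2)M_0+sM_1=s(s-1)B_0+sB_1$ recovers the two terms exactly. This is precisely what the paper does, but without any tensor algebra: it evaluates at $e_N=(0,\dots,0,1)$, where ``radial'' means $x_N$ and ``tangential'' means $x'=(x_1,\dots,x_{N-1})$, uses Proposition~\ref{prop:N_decomposite} to split $\lvert\nabla_{\!N}^k u\rvert_{N^k}^2$ by the number of $x'$-derivatives, observes via Proposition~\ref{prop:Kronecker} that only the blocks with $2l$ tangential derivatives survive at the origin, and identifies the tangential norm with $\gamma_{N-1}^{2l,2l}$; the closed form $2^{2l}l!\,(2l)!\,(\tfrac{N-3}{2}+l)_l$ for the latter is then obtained by induction on $N$ (Lemma~\ref{lem:2m}). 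To repair your route you would have to replace ``trace-free'' by ``tangential'' throughout and then prove the norm formula for those pieces, which amounts to the same induction.
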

\par
We also obtain the following result 
as a special case of Theorem \ref{thm:ind}. 
\par
\begin{thm}
\label{thm:2-dim}
\begin{itemize}
\item[{\rm (i)}]
For any $N\in\mathbb{N}$ and $k\in\mathbb{Z}_+$, 
it holds
$$
\gamma_N^{-(N-2),k}
=
2^k
\left(
\frac N2+k-2
\right)_k
(N+k-3)_k
.
$$ 
\item[{\rm (ii)}]
For any $k\in\mathbb{N}$, 
it holds
$$
\ell_2^k
=
2^{k-1}((k-1)!)^2.
$$ 
\end{itemize}
\end{thm}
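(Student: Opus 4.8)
The plan is to treat both assertions as the \emph{harmonic} special cases of Theorem~\ref{thm:ind}. In part~(i) the exponent $s=-(N-2)$ is precisely the one for which $\Delta[\lvert x\rvert_N^{s}]=s(s+N-2)\lvert x\rvert_N^{s-2}$ vanishes on $\mathbb{R}^N\setminus\{0\}$, so $\lvert x\rvert_N^{-(N-2)}$ is harmonic there; in part~(ii) the function $\log\lvert x\rvert_2$ is the planar fundamental solution and is likewise harmonic off the origin. Consequently $\nabla_{\!N}^k u$ is a totally symmetric \emph{trace-free} array, and since a rotation-covariant symmetric trace-free array that is homogeneous in $x$ is determined up to a scalar, $\nabla_{\!N}^k u$ must be a multiple of one fixed tensor; this is the structural reason the double sum of Theorem~\ref{thm:ind} reduces to a single product in these two cases. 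I would exploit this directly rather than wrestle with the general formula.

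For part~(ii) I would work in complex notation in the plane. Writing $z=x_1+ix_2$, $\partial=\tfrac12(\partial_{x_1}-i\partial_{x_2})$, $\bar\partial=\tfrac12(\partial_{x_1}+i\partial_{x_2})$ and $\zeta=\xi_1+i\xi_2$, the symbol $(\xi\cdot\nabla)^k u=(\zeta\partial+\bar\zeta\bar\partial)^k u$ together with the rotation-invariant apolar (Fischer) inner product $\langle\xi^\alpha,\xi^\beta\rangle=\alpha!\,\delta_{\alpha\beta}$ yields
\begin{equation*}
\lvert\nabla_{\!2}^k u\rvert_{2^k}^2
=\sum_{\lvert\alpha\rvert=k}\frac{k!}{\alpha!}(\partial^\alpha u)^2
=2^k\sum_{a+b=k}\binom{k}{a}\lvert\partial^a\bar\partial^b u\rvert^2 ,
\end{equation*}
the point being that the monomials $\zeta^a\bar\zeta^{b}$ with $a+b=k$ carry distinct rotation weights $a-b$, hence are mutually orthogonal, with $\lVert\zeta^a\bar\zeta^b\rVert^2=2^k a!\,b!$. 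For $u=\log\lvert x\rvert_2=\tfrac12(\log z+\log\bar z)$ every mixed derivative $\partial^a\bar\partial^b u$ with $a,b\ge1$ vanishes, leaving only $\partial^k u=\tfrac12(-1)^{k-1}(k-1)!\,z^{-k}$ and its conjugate; substituting gives $\lvert\nabla_{\!2}^k u\rvert_{2^k}^2=2^{k-1}((k-1)!)^2\lvert x\rvert_2^{-2k}$, that is $\ell_2^k=2^{k-1}((k-1)!)^2$.

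For part~(i) I would use the trace-free description of the derivative array. Tracking the top-degree term through successive differentiation of $\lvert x\rvert_N^{-(N-2)}$ shows that
\begin{equation*}
\nabla_{\!N}^k[\lvert x\rvert_N^{-(N-2)}]
=(-1)^k 2^k\Bigl(\tfrac N2+k-2\Bigr)_k\,
\lvert x\rvert_N^{-(N-2+2k)}\,\mathcal{Y}^{(k)}(x),
\end{equation*}
where $\mathcal{Y}^{(k)}(x)$ is the symmetric trace-free part of $x_{i_1}\cdots x_{i_k}$; harmonicity forces the array to be a scalar multiple of this unique trace-free structure, and the leading coefficient $\prod_{j=0}^{k-1}\bigl(-(N-2+2j)\bigr)$ is read off from the top-order term. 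It then remains to evaluate the rotation-invariant quantity $\lVert\mathcal{Y}^{(k)}(x)\rVert_{N^k}^2=b_{N,k}\lvert x\rvert_N^{2k}$, the classical normalization of solid spherical harmonics; one finds $b_{N,k}=(N+k-3)_k\big/\bigl(2^k(\tfrac N2+k-2)_k\bigr)$. Inserting both displays into $\gamma_N^{-(N-2),k}=\lvert x\rvert_N^{2(k+N-2)}\lvert\nabla_{\!N}^k u\rvert_{N^k}^2$ cancels every power of $\lvert x\rvert_N$ and leaves $2^k(\tfrac N2+k-2)_k(N+k-3)_k$, as claimed.

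Alternatively one may prove both parts exactly as framed, by setting $s=-(N-2)$, respectively $N=2$, in Theorem~\ref{thm:ind}. At the harmonic value the inner sum over $n$ is a terminating hypergeometric series that a Chu--Vandermonde/Saalsch\"utz summation resolves into a single ratio of Pochhammer symbols --- for instance its $l=0$ instance equals $\binom{-(N-2)}{k}$ --- after which the remaining sum over $l$ resums to the stated product. Either way the crux is the same: pinning down the exact dimensional constant, namely the normalization $b_{N,k}$ of $\mathcal{Y}^{(k)}$ in the direct route, or equivalently carrying out the two nested hypergeometric summations in the substitution route. I expect the general-dimension part~(i) to be the genuine work, whereas part~(ii), through the complex-variable reduction, is essentially immediate.
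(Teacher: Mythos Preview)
Your plan is correct and both routes would go through, but the paper's own argument is markedly shorter than either of yours and worth knowing. It does \emph{not} pass through Theorem~\ref{thm:ind} or any tensor algebra; it uses only Theorem~\ref{thm:derivative_log_abs} together with the elementary Bochner-type identity
\[
\Delta_N\bigl[\lvert\nabla_{\!N}^{k-1}u\rvert_{N^{k-1}}^2\bigr]
=2\lvert\nabla_{\!N}^k u\rvert_{N^k}^2
+2\bigl(\nabla_{\!N}^{k-1}u,\nabla_{\!N}^{k-1}[\Delta_N u]\bigr)_{N^{k-1}}.
\]
For harmonic $u$ the cross term vanishes, so $\lvert\nabla_{\!N}^k u\rvert_{N^k}^2=\tfrac12\Delta_N\lvert\nabla_{\!N}^{k-1}u\rvert_{N^{k-1}}^2$. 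Theorem~\ref{thm:derivative_log_abs} turns the right-hand side into $\tfrac12\,\gamma_N^{-(N-2),k-1}\Delta_N[\lvert x\rvert_N^{-2(N+k-3)}]$, and one application of $\Delta_N[\lvert x\rvert_N^{\nu}]=\nu(\nu+N-2)\lvert x\rvert_N^{\nu-2}$ yields the clean recursion
\[
\gamma_N^{-(N-2),k}=2(N+k-3)\Bigl(\tfrac{N}{2}+k-2\Bigr)\gamma_N^{-(N-2),k-1},
\qquad
\ell_2^{k}=2(k-1)^2\,\ell_2^{k-1},
\]
from which both formulas follow by telescoping. Your complex-variable proof of~(ii) is just as fast and arguably more transparent. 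Your approach to~(i) via the trace-free tensor $\mathcal{Y}^{(k)}$ is conceptually the right picture, but it relocates all the work to the normalization constant $b_{N,k}$, which you quote rather than prove; deriving $b_{N,k}$ from scratch (say by the analogous recursion $b_{N,k}/b_{N,k-1}$) is essentially the same computation the paper does, so nothing is saved. The hypergeometric substitution route you sketch last would also succeed but is the longest of the three.
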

\par
\begin{rem}
For small $k$, 
we have calculated
the concrete values of $\gamma_N^{s,k}$
and $\ell_N^k$;
\begin{align*}
&
\gamma_N^{s,1}
=
s^2
,\;
\gamma_N^{s,2}
=
s^2 (s^2-2s+N)
,\;
\gamma_N^{s,3}
=
s^2 (s-2)^2 (s^2-2s+3N-2)
,
\\
&
\gamma_N^{s,4}
=
s^2 (s-2)^2
(s^4-8 s^3+(16+6N) s^2+(12-36N)s+3 N^2+54 N-48)
,
\\
&
\ell_N^1=1
,\;
\ell_N^2=N
,\;
\ell_N^3
=
4(3N-2)
,\;
\ell_N^4
=
12(N^2+18N-16)
,
\\
&
\ell_N^5
=
192(5N^2+30N-32)
,\;
\ell_N^6
=
960(N^3+78N^2+224N-288), 
\\
&
\ell_N^7
=
34560(7N^3+196N^2+308N-496), 
\\
&
\ell_N^8
=
241920(N^4+204N^3+3052N^2+2736N-5888). 
\end{align*}
\end{rem}
\par
%
%
%
\par
\par
As we mentioned before, 
it is essential to define the norm 
$\lvert\nabla^k u(x)\rvert$ 
appropriately. 
\par
\begin{rem}
\label{rem:bad_example}
One may also adopt some other 
plausible definition instead of $\lvert\nabla_{\!N}^k u(x)\rvert_{N^k}$ defined before. 
For instance, 
let us define 
\begin{align*}
\lvert\tilde\nabla_{\!N}^k u(x)\rvert_
{{N+k-1\choose k}}
&=
\left(
\sum_{\alpha_1+\alpha_2+\cdots+\alpha_N=k}
(
D_1^{\alpha_1}
D_2^{\alpha_2}
\cdots
D_N^{\alpha_N}
 u(x))^2
\right)^{1/2} 
\\
&=
\left(
\sum_{1\le i_1\le i_2\le\cdots\le i_k\le N}
(
D_{i_1}
D_{i_2}
\cdots
D_{i_k}
    u(x)
)^2
\right)^{1/2},
\end{align*}
which gives a norm of $\nabla^k u(x)$. 
Putting $k=2$, 
we see that both the functions
\begin{gather*}
(
\lvert x\rvert_N^{2-s}
\lvert
\tilde\nabla_{\!N}^2
[
\lvert x\rvert_N^s
]
\rvert_{N(N+1)/2}
)^2
=
s^2
\left(
  N+2s-4+
(s-2)^2
\sum_{1\le i_1\le i_2\le N}
\frac{x_{i_1}^2 x_{i_2}^2}
     {\lvert x\rvert_N^4}
\right),
\\
(
\lvert x\rvert_N^2
\lvert
\tilde\nabla_{\!N}^2
[
\log
\lvert x\rvert_N
]
\rvert_{N(N+1)/2}
)^2
=
  N-4+
4
\sum_{1\le i_1\le i_2\le N}
\frac{x_{i_1}^2 x_{i_2}^2}
     {\lvert x\rvert_N^4}
\end{gather*}
are not constants on $\mathbb{R}^N\setminus\{0\}$ 
unless $N=1$ or $s\in\{0,2\}$. 
To illustrate how they are different clearly, 
note that
\begin{align*}
\lvert\nabla_{\!N}^k u(x)\rvert_{N^k}
&=
\left(
\sum_{\alpha_1+\alpha_2+\cdots+\alpha_N=k}
\frac{k!}
     {\alpha_1!\,\alpha_2!\cdots\alpha_N!}
(
D_1^{\alpha_1}
D_2^{\alpha_2}
\cdots
D_N^{\alpha_N}
 u(x))^2
\right)^{1/2}
\\
&=
\left(
\sum_{1\le i_1\le i_2\le\cdots\le i_k\le N}
\frac{k!}
     {\displaystyle\prod_{l=1}^N \sharp\{n;\,i_n=l\}!}
(
D_{i_1}
D_{i_2}
\cdots
D_{i_k}
    u(x)
)^2
\right)^{1/2},
\end{align*}
where $\sharp S$ denote the cardinality 
of a finite set $S$. 
\end{rem}
\par
The present work is originated 
in our desire to investigate 
Br\'ezis-Gallou\"et-Wainger type inequalities. 
The authors together with Wadade \cite{MSSW1}, 
\cite{MSW1} and \cite{MSW2} 
investigated the sharp constants 
of 
such inequalities 
in the first order critical Sobolev space 
$W_0^{1,N}(\Omega)$ 
on a bounded domain $\Omega$ in $\mathbb{R}^N$ 
with $N\in\mathbb{N}\setminus\{1\}$. 
In their forthcoming paper \cite{MSS2}, 
they shall give a lower bound 
in terms of $\ell_N^k$ 
for the sharp constants 
of such inequalities 
in the $k$-th order critical Sobolev space 
$W_0^{k,N/k}(\Omega)$ 
by calculating the exact values 
of homogeneous Sobolev norms 
of the radial logarithmic function 
on annuli. 
To explain more concretely, 
we can give a sufficient condition 
for $\lambda_1>0$ and $\lambda_2\in\mathbb{R}$ 
that the inequality 
\begin{gather*}
\begin{aligned}
\label{eq:DL_X}
\lVert u\rVert_{L^\infty(\Omega)}
^{N/(N-k)}
&\le
\lambda_1
\log
(
  1+
\lVert u\rVert_{A^{s,N/(s-\alpha),q}(\Omega)}
)
\\*
&\quad{}+
\lambda_2
\log
(
  1+
\log
(
  1+
\lVert u\rVert_{A^{s,N/(s-\alpha),q}(\Omega)}
)
)
+C
\end{aligned}
\\
\text{for\;}
u\in W^{k,N/k}_0(\Omega)\cap A^{s,N/(s-\alpha),q}(\Omega)
\text{\;\;with\;}
\Vert\nabla^k  u\Vert_{L^{N/k}(\Omega)}=1
\end{gather*}
fails for any constant $C$ independent of $u$, 
where $k\in\{1,2,\ldots,N-1\}$, 
$0<\alpha\le s<\infty$, 
$0<q<\infty$  
and we denote by $A^{s,p,q}$ 
either the Besov space $B^{s,p,q}$ 
or the Triebel-Lizorkin space $F^{s,p,q}$. 
The results in \cite{Brezis2} and \cite{Brezis1} 
obtained by Br\'ezis, 
Gallou\"et and Wainger imply that  
this inequality holds 
for sufficiently large $\lambda_1$ 
and arbitrary $\lambda_2$ 
with a suitable constant $C$ 
provided that 
$A^{s,N/(s-\alpha),q}(\Omega)$ is replaced by 
the Sobolev space 
(or the potential space) 
$H^{s,N/(s-\alpha)}(\Omega)$. 
Since $H^{s,N/(s-\alpha)}(\Omega)=F^{s,N/(s-\alpha),2}(\Omega)$, 
the same assertion holds in the cases 
$A^{s,N/(s-\alpha),q}(\Omega)=F^{s,N/(s-\alpha),q}(\Omega)$ with $0<q\le 2$ 
and 
$A^{s,N/(s-\alpha),q}(\Omega)=B^{s,N/(s-\alpha),q}(\Omega)$ with $0<q\le \min\{N/(s-\alpha),2\}$ 
by virtue of the embedding theorems 
of Besov and Triebel-Lizorkin spaces.
\par
We now describe how we organized the present paper; 
Sections \ref{sect:prf_1}, 
\ref{sect:prf_2} and \ref{sect:prf_3} 
are devoted to proving 
Theorems \ref{thm:derivative_log_abs}, 
\ref{thm:ind} and \ref{thm:2-dim}, 
respectively. 
\par
\section{Proof of Theorem \ref{thm:derivative_log_abs}}
\label{sect:prf_1}
\par
In this section, 
we shall prove Theorem \ref{thm:derivative_log_abs}. 
The following two propositions are easy to prove. 
\par
\begin{prop}
\label{prop:hom}
Let $s\in\mathbb{R}$ 
and $u\in C(\mathbb{R}^N\setminus\{0\})$ 
be homogeneous of degree $s$, 
that is, 
$$
u(\lambda x)
=
\lambda^s 
u(x)
\text{\;\;for\/\;}
x\in\mathbb{R}^N\setminus\{0\},\,
\lambda>0.
$$
\begin{itemize}
\item[\rm{(i)}]
If $v\in C(\mathbb{R}^N\setminus\{0\})$ 
is homogeneous of degree $s$ as well, 
then so is $u+v$. 
\item[\rm{(ii)}]
For $\nu\in\mathbb{R}$, 
$\lvert u\rvert^\nu$ is homogeneous of degree $s\nu$.
\item[\rm{(iii)}]
If $u\in C^1(\mathbb{R}^N\setminus\{0\})$ 
and $i\in I_N$, 
then $D_i u$ is homogeneous of degree $s-1$.
\end{itemize}
\end{prop}
\par
For a square matrix $A$ of order $N$, 
let us define 
$$
A[x]
=
{}^{\rm t}(A\,{}^{\rm t}\!x)
=
x\,{}^{\rm t}\!A
\text{\;\;for\;}
x=(x_1,x_2,\ldots,x_N)\in\mathbb{R}^N.
$$
\par
\begin{prop}
\label{prop:rad_A}
Let $s\in\mathbb{R}$ 
and $u\in C^1(\mathbb{R}^N\setminus\{0\})$ 
be homogeneous of degree $s$ and radially symmetric, 
that is, 
$$
u(A[x])
=
u(x)
\text{\;\;for\/\;}
x\in\mathbb{R}^N\setminus\{0\},\,
A\in O(N),
$$
where $O(N)$ denotes the orthogonal group of order $N$. 
Then there exists a constant $c\in\mathbb{R}$ 
such that 
$$
u(x)
=
c\lvert x\rvert_N^s
\text{\;\;for\/\;}
x\in\mathbb{R}^N\setminus\{0\}.
$$
\end{prop}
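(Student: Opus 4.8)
The plan is to exploit the transitivity of the orthogonal group $O(N)$ on the unit sphere, combined with homogeneity, to reduce the value of $u$ at an arbitrary point to its value at a single fixed unit vector.

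First I would fix the standard basis vector $e=(1,0,\ldots,0)\in\mathbb{R}^N$ and set $c=u(e)$. For an arbitrary $x\in\mathbb{R}^N\setminus\{0\}$, write $r=\lvert x\rvert_N>0$ and $\omega=r^{-1}x$, so that $\lvert\omega\rvert_N=1$. Applying homogeneity of degree $s$ with $\lambda=r$ gives $u(x)=u(r\omega)=r^s u(\omega)$, so that it suffices to prove $u(\omega)=c$ for every unit vector $\omega$.

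Next I would invoke the transitivity of $O(N)$ on the unit sphere. Given a unit vector $\omega$, Gram--Schmidt orthonormalization extends $\omega$ to an orthonormal basis $\{\omega,v_2,\ldots,v_N\}$ of $\mathbb{R}^N$; arranging these vectors appropriately as the rows of a matrix $A$ yields $A\in O(N)$ with $A[e]=\omega$, where $A[\,\cdot\,]$ is the action defined just before the statement. The radial symmetry hypothesis $u(A[x])=u(x)$, taken at $x=e$, then gives $u(\omega)=u(A[e])=u(e)=c$. Substituting into the identity of the previous step yields $u(x)=c\,r^s=c\lvert x\rvert_N^s$, which is the assertion.

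The argument is essentially elementary; the only point demanding a little care is to match the Gram--Schmidt construction to the paper's convention $A[x]={}^{\rm t}(A\,{}^{\rm t}\!x)$, so that one genuinely obtains $A[e]=\omega$ rather than some transposed or permuted vector. I should also remark that the $C^1$ hypothesis is not actually used in this argument---continuity, or indeed mere well-definedness of the values, suffices---and is retained only for uniformity with the neighboring results.
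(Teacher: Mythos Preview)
Your argument is correct and is precisely the standard elementary proof. The paper itself does not give a proof of this proposition; it introduces Propositions~\ref{prop:hom} and~\ref{prop:rad_A} with the sentence ``The following two propositions are easy to prove'' and then states them without further justification. Your write-up thus supplies what the paper leaves implicit, and your closing observations---that the $C^1$ hypothesis is not actually used, and that one must match the Gram--Schmidt output to the convention $A[x]={}^{\rm t}(A\,{}^{\rm t}\!x)$ (so that $\omega$ should be the first \emph{column} of $A$, not the first row, to obtain $A[e]=\omega$)---are both apt.
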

\par
To prove Theorem \ref{thm:derivative_log_abs}, 
we need to use the Fourier transform on $\mathbb{R}^N$. 
Let $\mathcal{S}(\mathbb{R}^N)$ 
denote the Schwartz class on $\mathbb{R}^N$. 
Define the Fourier transform $\mathcal{F}_N$ 
and its inverse $\mathcal{F}_N^{-1}$ on $\mathbb{R}^N$ by
\begin{align*}
\mathcal{F}_Nu(\xi)
&=
\frac{1}
     {(2\pi)^{N/2}}
\int_{\mathbb{R}^N}
e^{-\sqrt{-1}\,(x,\xi)_N}
u(x)
dx
\text{\;\;for\;}
\xi\in\mathbb{R}^N,
\\
\mathcal{F}_N^{-1}u(x)
&=
\frac{1}
     {(2\pi)^{N/2}}
\int_{\mathbb{R}^N}
e^{\sqrt{-1}\,(x,\xi)_N}
u(\xi)
d\xi
\text{\;\;for\;}
x\in\mathbb{R}^N,\,
u\in\mathcal{S}(\mathbb{R}^N),
\end{align*}
respectively, 
where 
$\sqrt{-1}$ denotes the imaginary unit 
and 
$$
(x,\xi)_N
=
\sum_{i=1}^N
x_i \xi_i
\text{\;\;for\;}
x=(x_1,x_2,\ldots,x_N),\xi=(\xi_1,\xi_2,\ldots,\xi_N)
\in\mathbb{R}^N.
$$
%
\par
The crux of Theorem \ref{thm:derivative_log_abs}
is the following observation by using the Fourier transform. 
\par
\begin{lem}
\label{lem:rad}
If $u\in\mathcal{S}(\mathbb{R}^N)$ 
is real-valued and radially symmetric, 
then so is $\lvert\nabla_{\!N}^k u\rvert_{N^k}^2$ 
for $k\in\mathbb{Z}_+$.
\end{lem}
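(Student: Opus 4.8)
The plan is to pass to the Fourier side, where the action of the orthogonal group is transparent. First I would record that $\hat u:=\mathcal{F}_N u$ is again real-valued and radially symmetric: for $A\in O(N)$ the substitution $x=A[y]$ (whose Jacobian is $1$ since $\lvert\det A\rvert=1$), together with $(A[y],A[\xi])_N=(y,\xi)_N$ and $u(A[y])=u(y)$, gives $\mathcal{F}_N u(A[\xi])=\mathcal{F}_N u(\xi)$, while reality of $\hat u$ follows because $u$ is real and even. Since $u\in\mathcal{S}(\mathbb{R}^N)$, so is $\hat u$, which legitimizes all the manipulations below.

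Next I would express each component $D_i u$ through the inverse Fourier transform, turning differentiation into multiplication by monomials: for $i=(i_1,\ldots,i_k)\in I_N^k$,
\[
D_i u(x)
=
\frac{(\sqrt{-1})^k}{(2\pi)^{N/2}}
\int_{\mathbb{R}^N}
e^{\sqrt{-1}\,(x,\xi)_N}\,
\xi_{i_1}\cdots\xi_{i_k}\,
\hat u(\xi)\,d\xi .
\]
Squaring, writing the product of the two integrals as a double integral in $(\xi,\eta)$, and summing over $i\in I_N^k$, the decisive step is the factorization
\[
\sum_{i\in I_N^k}\prod_{p=1}^k \xi_{i_p}\eta_{i_p}
=
\prod_{p=1}^k\Bigl(\sum_{i_p=1}^N \xi_{i_p}\eta_{i_p}\Bigr)
=
\bigl((\xi,\eta)_N\bigr)^k ,
\]
which collapses the whole tensor of $k$-th order derivatives into a single scalar built from the inner product. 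This yields the representation
\[
\lvert\nabla_{\!N}^k u(x)\rvert_{N^k}^2
=
\frac{(-1)^k}{(2\pi)^N}
\iint_{\mathbb{R}^N\times\mathbb{R}^N}
e^{\sqrt{-1}\,(x,\xi+\eta)_N}\,
\bigl((\xi,\eta)_N\bigr)^k\,
\hat u(\xi)\,\hat u(\eta)\,d\xi\,d\eta .
\]

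From this the conclusion is immediate. Fixing $A\in O(N)$ and substituting $\xi\mapsto A[\xi]$, $\eta\mapsto A[\eta]$ in the last display, I would use $\lvert\det A\rvert=1$, the invariance $(\xi,\eta)_N=(A[\xi],A[\eta])_N$, the radiality $\hat u(A[\xi])=\hat u(\xi)$, and $(A[x],A[\xi]+A[\eta])_N=(x,\xi+\eta)_N$ to read off $\lvert\nabla_{\!N}^k u(A[x])\rvert_{N^k}^2=\lvert\nabla_{\!N}^k u(x)\rvert_{N^k}^2$, i.e. radial symmetry of $\lvert\nabla_{\!N}^k u\rvert_{N^k}^2$.

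The only points requiring care — and the main, if modest, obstacle — are the interchange of summation with integration and the use of Fubini's theorem; both are justified because $\hat u\in\mathcal{S}(\mathbb{R}^N)$ forces $((\xi,\eta)_N)^k\hat u(\xi)\hat u(\eta)$ to be absolutely integrable. The genuinely essential ingredient is the combinatorial collapse $\sum_i\prod_p \xi_{i_p}\eta_{i_p}=((\xi,\eta)_N)^k$: it is exactly what turns the rotation invariance of the Euclidean inner product into rotation invariance of the squared norm. I note in passing that the same identity underlies a purely elementary alternative avoiding the Fourier transform: applying the chain rule to $u(A[x])=u(x)$ gives $D_i u(x)=\sum_{m\in I_N^k}A_{m_1 i_1}\cdots A_{m_k i_k}\,(D_m u)(A[x])$, and summing the squares over $i$ while using $\sum_j A_{aj}A_{bj}=\delta_{ab}$ collapses the double sum to $\sum_m (D_m u)(A[x])^2$, giving the result for any $C^k$ radial $u$; the Fourier argument is the one fitting the present setup.
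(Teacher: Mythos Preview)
Your argument is correct and follows essentially the same Fourier-analytic route as the paper: represent $D_i u$ on the Fourier side, collapse $\sum_{i}\prod_p \xi_{i_p}\eta_{i_p}$ to $((\xi,\eta)_N)^k$, and conclude by the $O(N)$-invariance of the inner product and the radiality of the transform; the only cosmetic difference is that the paper writes the second copy of $D_i u$ via $\mathcal{F}_N^{-1}u$, obtaining $e^{\sqrt{-1}(x,\xi-\eta)_N}$ in place of your $e^{\sqrt{-1}(x,\xi+\eta)_N}$ (so it never needs the observation that $\hat u$ is real). Your closing chain-rule remark is a valid, more elementary alternative that the paper does not mention.
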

\par
\begin{proof}
The conclusion is trivial if $k=0$; 
we may assume $k\in\mathbb{N}$ below.  
Let $i=(i_1,i_2,\ldots,i_k)\in I_N^k$. 
By the Fourier inversion formula and \cite[Proposition 2.2.11 (10)]{Grafakos1}, 
we have two expressions of $D_i u(x)$; 
\begin{align*}
D_i u(x)
&=
D_{i_1}
D_{i_2}
\cdots
D_{i_k}
[
\mathcal{F}_N^{-1}
[
\mathcal{F}_N
u
]
]
(x)
\\
&=
\frac{(\sqrt{-1})^k}
     {(2\pi)^{N/2}}
\int_{\mathbb{R}^N}
e^{\sqrt{-1}\,(x,\xi)_N}
\xi_{i_1}
\xi_{i_2}
\cdots
\xi_{i_k}
\mathcal{F}_N
u(\xi)
d\xi
\text{\;\;for\;}
x\in\mathbb{R}^N
\end{align*}
and
\begin{align*}
D_i u(x)
&=
D_{i_1}
D_{i_2}
\cdots
D_{i_k}
[
\mathcal{F}_N
[
\mathcal{F}_N^{-1}
u
]
]
(x)
\\
&=
\frac{(-\sqrt{-1})^k}
     {(2\pi)^{N/2}}
\int_{\mathbb{R}^N}
e^{-\sqrt{-1}\,(x,\eta)_N}
\eta_{i_1}
\eta_{i_2}
\cdots
\eta_{i_k}
\mathcal{F}_N^{-1}
u(\eta)
d\eta
\text{\;\;for\;}
x\in\mathbb{R}^N.
\end{align*}
Thus we deduce 
\begin{gather*}
\begin{aligned}
{}&
(D_i u(x))^2
\\
&=
\frac{1}
     {(2\pi)^N}
\iint_{\mathbb{R}^N \times \mathbb{R}^N}
e^{\sqrt{-1}\,(x,\xi-\eta)_N}
\xi_{i_1}
\xi_{i_2}
\cdots
\xi_{i_k}
\eta_{i_1}
\eta_{i_2}
\cdots
\eta_{i_k}
\mathcal{F}_N
u(\xi)
\mathcal{F}_N^{-1}
u(\eta)
d\xi\,
d\eta
\end{aligned}
\\
\text{for\;}
x\in\mathbb{R}^N.
\end{gather*}
Hence we obtain 
\begin{align*}
{}&\lvert\nabla_{\!N}^k u(x)\rvert_{N^k}^2
\\
&=
\sum_{i\in I_N^k}
(D_i u(x))^2
\\
&=
\frac{1}
     {(2\pi)^N}
\sum_{i\in I_N^k}
\iint_{\mathbb{R}^N \times \mathbb{R}^N}
e^{\sqrt{-1}\,(x,\xi-\eta)_N}
\xi_{i_1}
\xi_{i_2}
\cdots
\xi_{i_k}
\eta_{i_1}
\eta_{i_2}
\cdots
\eta_{i_k}
\\*[-5pt]
&\qquad\qquad\qquad\qquad\qquad
\times{}
\mathcal{F}_N
u(\xi)
\mathcal{F}_N^{-1}
u(\eta)
d\xi\,
d\eta
\\
&=
\frac{1}
     {(2\pi)^N}
\iint_{\mathbb{R}^N \times \mathbb{R}^N}
e^{\sqrt{-1}\,(x,\xi-\eta)_N}
(\xi,\eta)_N^k
\mathcal{F}_N
u(\xi)
\mathcal{F}_N^{-1}
u(\eta)
d\xi\,
d\eta
\text{\;\;for\;}
x\in\mathbb{R}^N.
\end{align*}
For $A\in O(N)$, 
we have 
$$
(A[x],y)_N
=
(x,{}^{\rm t}\!A[y])_N
,\;
(A[x],A[y])_N
=
(x,y)_N
\text{\;\;for\;}
x,y\in\mathbb{R}^N.
$$
Since Fourier transform and its inverse 
of a radially symmetric function 
are also radially symmetric 
(see e.g. \cite[Proposition 2.2.11 (13)]{Grafakos1}), 
we see that 
$$
[\mathcal{F}_N u](A[\xi])
=
\mathcal{F}_N u(\xi)
,\;
[\mathcal{F}_N^{-1} u](A[\xi])
=
\mathcal{F}_N^{-1} u(\xi)
\text{\;\;for\;}
\xi\in\mathbb{R}^N.
$$
Changing variables 
$(\xi,\eta)=(A[\tilde\xi],A[\tilde\eta])$, 
we have
\begin{align*}
{}&
\lvert\nabla_{\!N}^k u(A[x])\rvert_{N^k}^2
\\
&=
\frac{1}
     {(2\pi)^N}
\iint_{\mathbb{R}^N \times \mathbb{R}^N}
e^{\sqrt{-1}\,(A[x],\xi-\eta)_N}
(\xi,\eta)_N^k
\mathcal{F}_N
u(\xi)
\mathcal{F}_N^{-1}
u(\eta)
d\xi\,
d\eta
\\
&=
\frac{1}
     {(2\pi)^N}
\iint_{\mathbb{R}^N \times \mathbb{R}^N}
e^{\sqrt{-1}\,(x,{}^{\rm t}\!A[\xi-\eta])_N}
(\xi,\eta)_N^k
\mathcal{F}_N
u(\xi)
\mathcal{F}_N^{-1}
u(\eta)
d\xi\,
d\eta
\\
&=
\frac{1}
     {(2\pi)^N}
\iint_{\mathbb{R}^N \times \mathbb{R}^N}
e^{\sqrt{-1}\,(x,\tilde\xi-\tilde\eta)_N}
(A[\tilde\xi],A[\tilde\eta])_N^k
[\mathcal{F}_N
u](A[\tilde\xi])
[\mathcal{F}_N^{-1}
u](A[\tilde\eta])
d\tilde\xi\,
d\tilde\eta
\\
&=
\frac{1}
     {(2\pi)^N}
\iint_{\mathbb{R}^N \times \mathbb{R}^N}
e^{\sqrt{-1}\,(x,\tilde\xi-\tilde\eta)_N}
(\tilde\xi,\tilde\eta)_N^k
\mathcal{F}_N
u(\tilde\xi)
\mathcal{F}_N^{-1}
u(\tilde\eta)
d\tilde\xi\,
d\tilde\eta
\\
&=
\lvert\nabla_{\!N}^k u(x)\rvert_{N^k}^2,
\end{align*}
which shows that 
$\lvert\nabla_{\!N}^k u\rvert_{N^k}^2$ is radially symmetric. 
\end{proof}
\par
We conclude the proof of Theorem \ref{thm:derivative_log_abs}. 
Let 
$$
B_r^N=\{x\in\mathbb{R}^N;\,\lvert x\rvert_N<r\}
\text{\;\;for\;}
r>0. 
$$
\par
\begin{proof}[Proof of Theorem \ref{thm:derivative_log_abs}]
For $j\in\mathbb{N}$, 
choose $\psi_j\in C_{\rm c}^\infty((0,\infty))$ satisfying
$$
\psi_j(r)
=
\begin{cases}
1
&
\text{for\;}
\dfrac1j<r<j,
\smallskip
\\
0
&
\text{for\;}
0<r<\dfrac{1}{2j} \text{\;\;or\;} r>2j.
\end{cases}
$$
Then the functions $\psi_j(\lvert x\rvert_N)
\lvert x\rvert_N^s$,
 $\psi_j(\lvert x\rvert_N)
\log\lvert x\rvert_N$ 
belong to $\mathcal{S}(\mathbb{R}^N)$ and 
are real-valued, 
radially symmetric.
Also, they satisfy
$$
\psi_j(\lvert x\rvert_N)
\lvert x\rvert_N^s
=
\lvert x\rvert_N^s
,\;
\psi_j(\lvert x\rvert_N)
\log\lvert x\rvert_N
=
\log\lvert x\rvert_N
\text{\;\;for\;}
x\in B_j^N\setminus\overline{B_{1/j}^N}.
$$
Since Lemma \ref{lem:rad} yields that 
$\lvert\nabla_{\!N}^k [\psi_j(\lvert x\rvert_N)
\lvert x\rvert_N^s]\rvert_{N^k}^2$ 
and $\lvert\nabla_{\!N}^k [\psi_j(\lvert x\rvert_N)
\log\lvert x\rvert_N]\rvert_{N^k}^2$ 
are radially symmetric, 
we deduce that so are 
$\lvert\nabla_{\!N}^k [\lvert x\rvert_N^s]\rvert_{N^k}^2$ 
and $\lvert\nabla_{\!N}^k [\log\lvert x\rvert_N]\rvert_{N^k}^2$ 
on $B_j^N\setminus\overline{B_{1/j}^N}$, 
and then on $\mathbb{R}^N\setminus\{0\}$ 
because $j\in\mathbb{N}$ is arbitrary. 
\par
(i)
It follows from Proposition \ref{prop:hom} 
that for $i\in I_N^k$, 
the functions $\lvert x\rvert_N^s$, 
$D_i[\lvert x\rvert_N^s]$ 
and $(D_i[\lvert x\rvert_N^s])^2$ 
are homogeneous of degree $s$, 
$s-k$ and $2(s-k)$, 
respectively. 
Hence $\lvert\nabla_{\!N}^k [\lvert x\rvert_N^s]\rvert_{N^k}^2$ 
is also homogeneous of degree $2(s-k)$. 
Then the desired conclusion immediately follows 
from Proposition \ref{prop:rad_A}. 
\par
(ii)
Since
$$
D_i[\log\lvert x\rvert_N]
=
\frac{x_i}
     {\lvert x\rvert_N^2}
\text{\;\;for\;}
x\in\mathbb{R}^N\setminus\{0\},\,
i\in I_N,
$$
we deduce that this function 
is homogeneous of degree $-1$. 
The rest of the proof is quite similar to (i). 
\end{proof}
\par
\section{Proof of Theorem \ref{thm:ind}}
\label{sect:prf_2}
\par
In this section, 
we prove Theorem \ref{thm:ind}. 
We decompose it into the following three lemmas.
\par
\begin{lem}
\label{lem:1-dim}
Theorem \ref{thm:ind} holds true for $N=1$. 
Namely: 
\begin{itemize}
\item[{\rm (i)}]
For any $k\in\mathbb{Z}_+$ 
and $s\in\mathbb{R}$,  
it holds
\begin{align*}
\gamma_1^{s,k}
&=
\left(
k!
\sum_{n=\lceil k/2\rceil}^k
2^{2n-k}
{s/2 \choose n}
{n \choose k-n}
\right)^2.
\end{align*}
\item[{\rm (ii)}]
For any $k\in\mathbb{N}$, 
it holds
\begin{align*}
\ell_1^k
&=
\left(
k!
\sum_{n=\lceil k/2\rceil}^k
2^{2n-k}
\frac{(-1)^n}
     {2n}
{n \choose k-n}
\right)^2.
\end{align*}
\end{itemize}
\end{lem}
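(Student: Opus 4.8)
The plan is to reduce the lemma to two elementary binomial-coefficient identities and then prove each by extracting a single Taylor coefficient from a generating function built on the substitution $(1+x)^2=1+2x+x^2$. First I would recall that, by (\ref{eq:1-dim_result}), we already know $\gamma_1^{s,k}=((s)_k)^2$ and $\ell_1^k=((k-1)!)^2$. Since both sides of each asserted formula are squares, it suffices to establish
$$
k!\sum_{n=\lceil k/2\rceil}^k 2^{2n-k}\binom{s/2}{n}\binom{n}{k-n}=(s)_k
$$
and
$$
k!\sum_{n=\lceil k/2\rceil}^k 2^{2n-k}\frac{(-1)^n}{2n}\binom{n}{k-n}=(-1)^k(k-1)!,
$$
after which squaring yields (i) and (ii). Using $\binom{s}{k}=(s)_k/k!$, the first reduces further to $\sum_n 2^{2n-k}\binom{s/2}{n}\binom{n}{k-n}=\binom{s}{k}$.

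For the power identity I would start from $(1+x)^s=(1+2x+x^2)^{s/2}$, read as an identity of formal power series (equivalently, of convergent series for $\lvert x\rvert<1$). Expanding the right-hand side by the binomial series gives $\sum_{n\ge 0}\binom{s/2}{n}x^n(2+x)^n$, and expanding $(2+x)^n=\sum_{j=0}^n\binom{n}{j}2^{n-j}x^j$ produces a double series in which the monomial $x^k$ arises exactly from the pairs with $n+j=k$, i.e.\ $j=k-n$. The constraints $0\le k-n\le n$ force $\lceil k/2\rceil\le n\le k$, which is precisely the stated summation range, and the coefficient of $x^k$ becomes $\sum_n 2^{2n-k}\binom{s/2}{n}\binom{n}{k-n}$. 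Matching this against the coefficient $\binom{s}{k}$ of $x^k$ in $(1+x)^s$ gives the identity, and multiplication by $k!$ recovers $(s)_k$.

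For the logarithmic identity I would run the same substitution through the Mercator series: since $\log(1+x)=\tfrac12\log(1+(2x+x^2))=\tfrac12\sum_{n\ge 1}\frac{(-1)^{n-1}}{n}(2x+x^2)^n$, the very same expansion of $x^n(2+x)^n$ shows that the coefficient of $x^k$ equals $\sum_n 2^{2n-k}\frac{(-1)^{n-1}}{2n}\binom{n}{k-n}$ over the identical range $\lceil k/2\rceil\le n\le k$. Since the coefficient of $x^k$ in $\log(1+x)$ is $(-1)^{k-1}/k$, multiplying by $k!$ gives $(-1)^{k-1}(k-1)!$; replacing $(-1)^{n-1}$ by $(-1)^n$ (as in the lemma's statement) merely flips an overall sign, so the lemma's sum equals $(-1)^k(k-1)!$, and squaring yields $((k-1)!)^2=\ell_1^k$.

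I expect the only real care to lie in the bookkeeping rather than in any deep step: one must verify that $0\le k-n\le n$ translates exactly into the lower limit $\lceil k/2\rceil$ and upper limit $k$, and track the harmless sign that the squaring absorbs. Conceptually, the same two formulas arise from applying Fa\`a di Bruno's formula to $\lvert x\rvert^s=(x^2)^{s/2}$ and $\log\lvert x\rvert=\tfrac12\log(x^2)$, where the inner function $x^2$ has vanishing derivatives beyond order two; this is what pins down both the range $\lceil k/2\rceil\le n\le k$ and the factor $2^{2n-k}$, and it would give an alternative derivation that sidesteps (\ref{eq:1-dim_result}). Finally, to justify the word ``Namely,'' I would note that these single-sum formulas are exactly what Theorem \ref{thm:ind} predicts at $N=1$: there the factor $\bigl(\tfrac{N-3}{2}+l\bigr)_l=(l-1)_l$ vanishes for every $l\ge 1$ and equals $1$ for $l=0$, so the double sum collapses to its $l=0$ term, which—using $\binom{k-n}{0}=1$ and $(k-2\cdot 0)!\,0!=k!$—is precisely the stated expression.
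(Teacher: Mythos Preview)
Your proof is correct and is essentially the paper's own argument: the paper evaluates $\gamma_1^{s,k}$ and $\ell_1^k$ at the point $e_1$ via Proposition~\ref{prop:Taylor}\,(i), which amounts precisely to your coefficient extraction from $(1+x)^s=(1+2x+x^2)^{s/2}$ and $\log(1+x)=\tfrac12\log(1+2x+x^2)$ (the paper's $\rho_1(x)=2x+x^2$ is exactly your substitution). The only cosmetic difference is that you route through the known values in~(\ref{eq:1-dim_result}) and verify the sums equal $(s)_k$ and $\pm(k-1)!$, whereas the paper reads off the squared sums directly as $\lvert[f(\rho_1)]^{(k)}(0)\rvert^2$; the underlying computation is identical.
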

\par
\begin{lem}
\label{lem:ind_N}
Let $N\in\mathbb{N}\setminus\{1\}$.
\begin{itemize}
\item[\rm (i)]
For $k\in\mathbb{Z}_+$ and $s\in\mathbb{R}$, 
it holds
$$
\gamma_N^{s,k}
=
k!
\sum_{l=0}^{\lfloor k/2\rfloor}
\frac{(k-2l)!}
     {(2l)!}
\left(
\sum_{n=\lceil k/2\rceil}^{k-l}
2^{2n-k}
{s/2 \choose n}
{n \choose k-n}
{k-n\choose l}
\right)^2
\gamma_{N-1}^{2l,2l}.
$$
In particular, 
for $m\in\mathbb{Z}_+$, 
it holds
$$
\gamma_N^{2m,2m}
=
(2m)!
\sum_{l=0}^m
\frac{(2(m-l))!}
     {(2l)!}
{m \choose l}^2
\gamma_{N-1}^{2l,2l}.
$$
\item[\rm (ii)]
For $k\in\mathbb{N}$, 
it holds
$$
\ell_N^k
=
k!
\sum_{l=0}^{\lfloor k/2\rfloor}
\frac{(k-2l)!}
     {(2l)!}
\left(
\sum_{n=\lceil k/2\rceil}^{k-l}
2^{2n-k}
\frac{(-1)^n}
     {2n}
{n \choose k-n}
{k-n\choose l}
\right)^2
\gamma_{N-1}^{2l,2l}.
$$
\end{itemize}
\end{lem}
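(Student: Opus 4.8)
The plan is to establish a recursion that strips off one Cartesian direction, reducing the $N$-dimensional norm to an $(N-1)$-dimensional computation. Write $x=(x_1,y)$ with $y=(x_2,\dots,x_N)\in\mathbb{R}^{N-1}$. First I would prove the elementary splitting identity
\begin{equation*}
\lvert\nabla_{\!N}^k u\rvert_{N^k}^2
=\sum_{j=0}^k\binom kj\,\bigl\lvert\nabla_{\!N-1}^{k-j}[\partial_1^j u]\bigr\rvert_{(N-1)^{k-j}}^2,
\end{equation*}
where $\nabla_{\!N-1}$ differentiates only in $y$. This follows by grouping the multi-indices $i\in I_N^k$ according to which of their $k$ entries equal $1$: there are $\binom kj$ placements of the $j$ entries equal to $1$, and since mixed partials of a smooth function commute, every such placement yields the same value $\partial_1^j D_{i'}u$ with $i'\in\{2,\dots,N\}^{k-j}$.

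Next I would exploit radial symmetry. By Theorem~\ref{thm:derivative_log_abs} the two quantities in question are constant after the homogeneity is removed, so it suffices to evaluate at the pole $x=e_1$ (that is, $x_1=1$, $y=0$), which lies away from the origin. For $u=f(\lvert x\rvert_N)$ the partial $\partial_1^j u$ is radially symmetric in $y$ for fixed $x_1$, hence $\partial_1^j u=\sum_{p\ge0}c_p^{(j)}(x_1)\lvert y\rvert_{N-1}^{2p}$. Applying $\nabla_{\!N-1}^{k-j}$ and evaluating at $y=0$ annihilates every term except $2p=k-j$ (smaller $p$ over-differentiate the polynomial, larger $p$ vanish at $y=0$, and the cross terms in the norm vanish at $y=0$ as well); in particular only even $k-j=2l$ survive, and the surviving contribution is $\bigl(c_l^{(k-2l)}(1)\bigr)^2\lvert\nabla_{\!N-1}^{2l}[\lvert y\rvert_{N-1}^{2l}]\rvert^2=a_l^2\,\gamma_{N-1}^{2l,2l}$, where $a_l:=c_l^{(k-2l)}(1)$. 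This already yields the clean recursion $\gamma_N^{s,k}=\sum_{l=0}^{\lfloor k/2\rfloor}\binom k{2l}a_l^2\,\gamma_{N-1}^{2l,2l}$ (and similarly for $\ell_N^k$), so the whole problem is reduced to identifying the single Taylor coefficient $a_l$.

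It then remains to compute $a_l$, namely the coefficient of $\lvert y\rvert_{N-1}^{2l}$ in $\partial_1^{k-2l}u$ at $x_1=1$, for $u=(x_1^2+\lvert y\rvert_{N-1}^2)^{s/2}$ and for $u=\tfrac12\log(x_1^2+\lvert y\rvert_{N-1}^2)$. I would use the Fa\`a di Bruno formula with inner function $w=x_1^2+\lvert y\rvert_{N-1}^2$: since $\partial_1 w=2x_1$, $\partial_1^2 w=2$ and all higher $x_1$-derivatives vanish, only set partitions of the $k-2l$ differentiations into blocks of size one and two occur. Reindexing the partition sum by $n:=k-l-p$, where $p$ counts the blocks of size two, and expanding $w^{\,\cdot}$ in $\lvert y\rvert_{N-1}^2$ to pick out the $\lvert y\rvert^{2l}$ term, one is led exactly to
\begin{equation*}
a_l=(k-2l)!\sum_{n=\lceil k/2\rceil}^{k-l}2^{2n-k}\binom{s/2}n\binom n{k-n}\binom{k-n}l
\end{equation*}
in case (i), and to the same expression with $\binom{s/2}n$ replaced by $(-1)^n/(2n)$ in case (ii); the overall sign in (ii) is irrelevant because $a_l$ enters squared. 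Substituting these into the recursion and using $\binom k{2l}(k-2l)!^2=k!\,(k-2l)!/(2l)!$ produces the stated formulas, while the final assertion for $\gamma_N^{2m,2m}$ is then immediate on setting $s=2m$, since $\binom mn$ forces $n=m$ and collapses the inner sum to $\binom ml$.

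The main obstacle is this last step: matching the Fa\`a di Bruno output to the precise triple-binomial sum. After the reindexing, the limits $\lceil k/2\rceil\le n\le k-l$ must be checked by a parity argument, and the Pochhammer factor coming from differentiation must be merged with the one coming from the $\lvert y\rvert^{2l}$-extraction through the identity $(s/2)_{n-l}\,(s/2+l-n)_l=(s/2)_n$. This is the same algebraic identity, specialized to $l=0$, that underlies the one-dimensional Lemma~\ref{lem:1-dim}; carrying the extra parameter $l$ through all the factorials is where the bookkeeping is heaviest, and the log case requires the analogous simplification of $(-1)^{n-l-1}(n-l-1)!\,\binom{-(n-l)}{l}$ down to a multiple of $(-1)^n(n-1)!$.
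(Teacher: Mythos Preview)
Your proposal is correct and follows the same route as the paper: the coordinate-splitting identity (Proposition~\ref{prop:N_decomposite} there), evaluation at the unit pole, and the observation that only the even orders $k-j=2l$ survive and contribute a factor $\gamma_{N-1}^{2l,2l}$ (Proposition~\ref{prop:Kronecker}(ii)). The one cosmetic difference is in computing the scalar coefficient $a_l$: where you apply Fa\`a di Bruno to $f(x_1^2+\lvert y\rvert^2)$ and then merge the two Pochhammer factors via $(s/2)_{n-l}(s/2-n+l)_l=(s/2)_n$, the paper first translates the pole to the origin, Taylor-expands $f_s(t)=\sum_n\binom{s/2}{n}t^n$, and binomially expands $(\lvert x'\rvert_{N-1}^2+x_N^2+2x_N)^n$, so that the triple-binomial expression drops out directly from the closed form for $[(t^2+2t)^m]^{(k)}(0)$ (Proposition~\ref{prop:Kronecker}(i)) without the bookkeeping you flag as the main obstacle.
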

\par
\begin{lem}
\label{lem:2m}
For $N\in\mathbb{N}$ and $m\in\mathbb{Z}_+$, 
it holds
\begin{equation}
\label{eq:2m}
\gamma_N^{2m,2m}
=
2^{2m}
m!\,
(2m)!\,
\left(
\frac N2+m-1
\right)_m.
\end{equation}
\end{lem}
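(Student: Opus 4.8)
The plan is to exploit the special structure at $s=k=2m$: here $\lvert x\rvert_N^{2m}=(x_1^2+\cdots+x_N^2)^m$ is a polynomial of degree exactly $2m$, so every $2m$-th order partial derivative is a constant, and since $k-s=0$ the weight $\lvert x\rvert_N^{k-s}$ equals $1$. Thus $\gamma_N^{2m,2m}=\lvert\nabla_{\!N}^{2m}[\lvert x\rvert_N^{2m}]\rvert_{N^{2m}}^2$ is just a finite sum of squares of constants, and no homogeneity argument is needed. First I would rewrite this norm with the multinomial weights exhibited in Remark \ref{rem:bad_example}, giving
$$
\gamma_N^{2m,2m}
=
\sum_{\alpha_1+\cdots+\alpha_N=2m}
\frac{(2m)!}{\alpha_1!\cdots\alpha_N!}
\bigl(D_1^{\alpha_1}\cdots D_N^{\alpha_N}[\lvert x\rvert_N^{2m}]\bigr)^2 .
$$

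Next I would expand $\lvert x\rvert_N^{2m}$ by the multinomial theorem as $\sum_{\beta_1+\cdots+\beta_N=m}\frac{m!}{\beta_1!\cdots\beta_N!}\,x_1^{2\beta_1}\cdots x_N^{2\beta_N}$ and differentiate term by term. A monomial $x_1^{2\beta_1}\cdots x_N^{2\beta_N}$ of degree $2m$ survives $2m$ differentiations as a nonzero constant precisely when $\alpha_j=2\beta_j$ for every $j$, in which case the value is $\frac{m!}{\beta_1!\cdots\beta_N!}\prod_{j=1}^N(2\beta_j)!$. Hence only the even multi-indices $\alpha=(2\beta_1,\ldots,2\beta_N)$ contribute; substituting and cancelling the factorials, I expect the sum to collapse to
$$
\gamma_N^{2m,2m}
=
(2m)!\,(m!)^2
\sum_{\beta_1+\cdots+\beta_N=m}
\prod_{j=1}^N
\binom{2\beta_j}{\beta_j} .
$$

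The remaining, and genuinely substantive, task is to evaluate the combinatorial sum $\sum_{\beta_1+\cdots+\beta_N=m}\prod_{j=1}^N\binom{2\beta_j}{\beta_j}$. I would handle this with the generating function $\sum_{\beta=0}^\infty\binom{2\beta}{\beta}t^\beta=(1-4t)^{-1/2}$: raising it to the $N$-th power gives $(1-4t)^{-N/2}$, whose coefficient of $t^m$ is exactly the sum in question, namely $\binom{-N/2}{m}(-4)^m$. Converting to falling-factorial form via $(-N/2)_m=(-1)^m\bigl(\tfrac N2+m-1\bigr)_m$ turns this into $2^{2m}\bigl(\tfrac N2+m-1\bigr)_m/m!$, and substituting back yields $\gamma_N^{2m,2m}=2^{2m}m!\,(2m)!\,\bigl(\tfrac N2+m-1\bigr)_m$, which is \eqref{eq:2m}. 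The only points requiring care are the factorial bookkeeping that produces the central binomial coefficients $\binom{2\beta_j}{\beta_j}$ and the sign and reindexing in passing from $\binom{-N/2}{m}$ to $\bigl(\tfrac N2+m-1\bigr)_m$; the generating-function identity itself is standard, so I do not anticipate a serious obstacle, and one can sanity-check the outcome against $\gamma_N^{s,2}=s^2(s^2-2s+N)$ at $s=2$, which gives $4N$, matching $2^2\cdot1!\cdot2!\cdot\bigl(\tfrac N2\bigr)_1$.
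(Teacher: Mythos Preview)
Your argument is correct and takes a genuinely different route from the paper. The paper proves Lemma~\ref{lem:2m} by induction on $N$: the base case $N=1$ comes from \eqref{eq:1-dim_result}, and the inductive step feeds the hypothesis into the recursion $\gamma_N^{2m,2m}=(2m)!\sum_{l=0}^m\frac{(2(m-l))!}{(2l)!}\binom{m}{l}^2\gamma_{N-1}^{2l,2l}$ supplied by Lemma~\ref{lem:ind_N}, and then closes the sum with the separately proved combinatorial identity of Proposition~\ref{prop:m+1/2}. Your approach bypasses all of this machinery: you exploit directly that $\lvert x\rvert_N^{2m}$ is a homogeneous polynomial, compute the constant $2m$-th derivatives via the multinomial expansion, and reduce the problem to reading off the $t^m$-coefficient of $(1-4t)^{-N/2}$ using the standard central-binomial generating function. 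The paper's route has the virtue of reusing the dimensional recursion that it needs anyway for Theorem~\ref{thm:ind}, so Lemma~\ref{lem:2m} falls out of the same framework; your route is self-contained and more elementary, needing neither Lemma~\ref{lem:ind_N} nor Proposition~\ref{prop:m+1/2}, and in particular does not even invoke Theorem~\ref{thm:derivative_log_abs} since at $s=k=2m$ the constancy of $\lvert\nabla_{\!N}^{2m}[\lvert x\rvert_N^{2m}]\rvert_{N^{2m}}^2$ is trivial. The factorial bookkeeping and the sign in $(-N/2)_m=(-1)^m\bigl(\tfrac{N}{2}+m-1\bigr)_m$ (for the paper's falling-factorial convention) are handled correctly.
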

\par
Combining these three lemmas yields Theorem \ref{thm:ind}. 
We now concentrate on proving them. 
We need some propositions. 
For $m\in\mathbb{Z}_+$, 
define 
$$
\phi^m(t)=(t^2+2t)^m 
\text{\;\;for\;}
t\in\mathbb{R}.
$$ 
\par
\begin{prop}
\label{prop:Kronecker}
Let $m,k\in\mathbb{Z}_+$.
\begin{itemize}
\item[\rm (i)]
It holds 
$$
[\phi^m]^{(k)}
(0)
=
\chi^{}_{[m,2m]}(k)
2^{2m-k}k!\,
{m\choose k-m},
$$
where\/ $\chi_S$ denotes 
the characteristic function of a set $S$. 
\item[\rm (ii)]
It holds
$$
\lvert
\nabla_{\!N}^k
[\lvert\,\cdot\,\rvert_N^{2m}]
(0)
\rvert_{N^k}^2
=
\delta_{k,2m}
\gamma_N^{2m,2m}
=
\delta_{k,2m}
\gamma_N^{k,k}.
$$
\end{itemize}
\end{prop}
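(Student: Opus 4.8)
The plan is to treat the two parts separately, since each reduces to a short computation once the right observation is in place. For part (i), I would expand $\phi^m$ explicitly. Writing $\phi^m(t)=(t^2+2t)^m=t^m(t+2)^m$ and applying the binomial theorem to $(t+2)^m$ gives
$$
\phi^m(t)=\sum_{j=0}^{m}\binom{m}{j}2^{m-j}t^{m+j},
$$
so that $\phi^m$ is a polynomial whose only nonzero monomials are those $t^k$ with $m\le k\le 2m$. Reading off the coefficient of $t^k$ (put $j=k-m$) and using the elementary fact $[\phi^m]^{(k)}(0)=k!\,(\text{coefficient of }t^k)$ yields the stated value $2^{2m-k}k!\,\binom{m}{k-m}$. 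The factor $\chi_{[m,2m]}(k)$ merely records that $\phi^m$ vanishes to order $m$ at the origin and has degree $2m$, so the remaining $k$ contribute nothing.

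For part (ii), the key observation is that $\lvert x\rvert_N^{2m}=(x_1^2+\cdots+x_N^2)^m$ is a homogeneous polynomial of degree $2m$ on all of $\mathbb{R}^N$, with no singularity at the origin. By Proposition \ref{prop:hom}(iii) applied $k$ times, each component $D_i[\lvert x\rvert_N^{2m}]$ of $\nabla_{\!N}^k[\lvert x\rvert_N^{2m}]$ is homogeneous of degree $2m-k$. Consequently, for $k<2m$ every such derivative is a homogeneous polynomial of positive degree and hence vanishes at $0$, while for $k>2m$ every $k$-th order derivative of a degree-$2m$ polynomial vanishes identically; in either case $\lvert\nabla_{\!N}^k[\lvert x\rvert_N^{2m}](0)\rvert_{N^k}^2=0$. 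This accounts for the Kronecker delta $\delta_{k,2m}$.

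It then remains to evaluate the surviving case $k=2m$. Here I would invoke Theorem \ref{thm:derivative_log_abs}(i) with $s=2m$, which gives $\lvert\nabla_{\!N}^{2m}[\lvert x\rvert_N^{2m}]\rvert_{N^{2m}}^2=\gamma_N^{2m,2m}$ for every $x\in\mathbb{R}^N\setminus\{0\}$. Since each $D_i[\lvert x\rvert_N^{2m}]$ with $i\in I_N^{2m}$ is homogeneous of degree $0$, i.e.\ a constant, the left-hand side is a constant function on $\mathbb{R}^N\setminus\{0\}$ that extends continuously to the origin with the same value. Hence $\lvert\nabla_{\!N}^{2m}[\lvert x\rvert_N^{2m}](0)\rvert_{N^{2m}}^2=\gamma_N^{2m,2m}=\gamma_N^{k,k}$, which is the assertion.

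I do not expect a genuine obstacle in either part; both are essentially bookkeeping. The only points that require care are the admissible range of $k$ in (i) (equivalently, the convention $\binom{m}{k-m}=0$ when $k-m\notin\{0,1,\ldots,m\}$, so that the formula and the factor $\chi_{[m,2m]}(k)$ are consistent) and, in (ii), the passage from \emph{constant on $\mathbb{R}^N\setminus\{0\}$} to \emph{defined and equal at the origin}, which is legitimate precisely because $\lvert x\rvert_N^{2m}$ is smooth at $0$ when $2m$ is a nonnegative even integer.
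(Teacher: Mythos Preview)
Your proposal is correct and follows essentially the same route as the paper. For (i) the paper uses the identical binomial expansion $\phi^m(t)=\sum_{j=0}^m 2^{m-j}\binom{m}{j}t^{m+j}$ and reads off the value at $0$; for (ii) the paper likewise separates $k>2m$ (derivative of a degree-$2m$ polynomial vanishes) from $k\le 2m$, invoking Theorem~\ref{thm:derivative_log_abs}(i) to write $\lvert\nabla_{\!N}^k[\lvert x\rvert_N^{2m}]\rvert_{N^k}^2=\gamma_N^{2m,k}\lvert x\rvert_N^{2(2m-k)}$ on $\mathbb{R}^N\setminus\{0\}$ and then letting $x\to 0$ --- your direct homogeneity argument for $k<2m$ is a harmless variant of this limit step.
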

\par
\begin{proof}
(i) 
Expand $\phi^m$ by means of the binomial theorem;
$$
\phi^m(t)
=
\sum_{j=0}^m
2^{m-j}
{m\choose j}
t^{m+j}
\text{\;\;for\;}
t\in\mathbb{R}.
$$
Let $\nu_+=\max\{\nu,0\}$. 
For $k\in\mathbb{Z}_+$, 
we have 
\begin{align*}
[\phi^m]^{(k)}(t)
&=
\sum_{j=(k-m)_+}^m
2^{m-j}
{m\choose j}
(m+j)_k
t^{m+j-k}
\\
&=
\sum_{l=(m-k)_+}^{2m-k}
2^{2m-k-l}
{m\choose k-m+l}
(k+l)_k
t^l
\text{\;\;for\;}
t\in\mathbb{R},
\end{align*}
which implies the assertion. 
\par
(ii) 
If $k>2m$ and $i\in I_N^k$, 
then 
$D_i[\lvert x\rvert_N^{2m}]=0$ 
for $x\in\mathbb{R}^N$, 
which implies 
$$
\lvert
\nabla_{\!N}^k
[\lvert x\rvert_N^{2m}]
\rvert_{N^k}^2
=
0
\text{\;\;for\;}
x\in\mathbb{R}^N. 
$$
Meanwhile, if $k\le 2m$, 
then Theorem \ref{thm:derivative_log_abs} (i) 
shows that 
$$
\lvert
\nabla_{\!N}^k
[\lvert x\rvert_N^{2m}]
\rvert_{N^k}^2
=
\gamma_N^{2m,k}
\lvert x\rvert_N^{2(2m-k)}
\text{\;\;for\;}
x\in\mathbb{R}^N\setminus\{0\}.
$$
Hence a passage to the limit as  
$x\to0$ 
yields the assertion. 
\end{proof}
\par
In what follows, 
we use the notation 
$$
x'=(x_1,x_2,\ldots,x_{N-1})
\in\mathbb{R}^{N-1}
\text{\;\;for\;}
x=(x_1,x_2,\ldots,x_{N-1},x_N)\in\mathbb{R}^N 
$$
when $N\in\mathbb{N}\setminus\{1\}$. 
Let $\Omega$ be a domain in $\mathbb{R}^N$, 
and for $u\in C^k(\Omega)$, 
we write 
$$
\lvert
\nabla_{\!N-1}^k u(x)
\rvert_{(N-1)^k}^2
=
\sum_{i'\in I_{N-1}^k}
(D_{i'} u(x))^2
\text{\;\;for\;}
x\in\Omega.
$$
\par
\begin{prop}
\label{prop:N_decomposite}
Let $N\in\mathbb{N}\setminus\{1\}$, 
$k\in\mathbb{Z}_+$ 
and $\Omega$ be a domain in\/ $\mathbb{R}^N$. 
Then for $u\in C^k(\Omega)$, 
we have
$$
\lvert
\nabla_{\!N}^k u(x)
\rvert_{N^k}^2
=
\sum_{j=0}^k
{k\choose j}
\lvert
\nabla_{\!N-1}^j 
[D_N^{k-j}u](x)
\rvert_{(N-1)^j}^2
\text{\;\;for\/\;}
x\in\Omega.
$$
\end{prop}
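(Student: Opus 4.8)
The plan is to start directly from the definition of the left-hand side as a sum over ordered $k$-tuples,
$$
\lvert \nabla_{\!N}^k u(x)\rvert_{N^k}^2 = \sum_{i\in I_N^k}(D_i u(x))^2,
$$
and to reorganize this sum according to how many of the $k$ indices $i_1,\ldots,i_k$ equal the distinguished value $N$. First I would fix $j\in\{0,1,\ldots,k\}$ and collect all tuples $i\in I_N^k$ in which exactly $k-j$ of the entries equal $N$ while the remaining $j$ entries lie in $I_{N-1}=\{1,\ldots,N-1\}$. Such a tuple is specified by choosing which $j$ of the $k$ positions carry a value in $I_{N-1}$ --- there are ${k\choose j}$ such choices --- and then assigning to those positions an ordered tuple $i'\in I_{N-1}^j$, the other $k-j$ positions all being $N$.

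The crucial ingredient is that, since $u\in C^k(\Omega)$, the mixed partial derivatives commute, so $D_i u$ depends only on the multiset of indices. Hence for every tuple $i$ in the class above we have $D_i u = D_N^{k-j}D_{i'}u = D_{i'}[D_N^{k-j}u]$, independently of the order in which the derivatives are taken and of which particular positions were designated as primed. Summing the squares over the ${k\choose j}$ position patterns and over all $i'\in I_{N-1}^j$ therefore yields
$$
\sum_{i\in I_N^k}(D_i u)^2 = \sum_{j=0}^k {k\choose j}\sum_{i'\in I_{N-1}^j}(D_{i'}[D_N^{k-j}u])^2,
$$
and recognizing the inner sum as $\lvert\nabla_{\!N-1}^j[D_N^{k-j}u]\rvert_{(N-1)^j}^2$ gives exactly the asserted identity.

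The computation is essentially bookkeeping, so there is no deep obstacle; the one point requiring care is the combinatorial reindexing in the middle step --- justifying that, after invoking commutativity, each of the ${k\choose j}$ position patterns contributes the identical quantity $\sum_{i'\in I_{N-1}^j}(D_{i'}[D_N^{k-j}u])^2$, which is precisely what produces the binomial coefficient. It is also worth checking the two extreme terms for consistency with the stated conventions: $j=0$ gives the single summand $(D_N^k u)^2=\lvert\nabla_{\!N-1}^0[D_N^k u]\rvert_1^2$ under the agreement $\nabla^0 u=u$, while $j=k$ gives $\lvert\nabla_{\!N-1}^k u\rvert_{(N-1)^k}^2$ with $D_N^0$ read as the identity operator.
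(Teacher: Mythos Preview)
Your argument is correct and is essentially the same as the paper's: both proofs partition $I_N^k$ according to the number $j$ of entries lying in $I_{N-1}$, invoke commutativity of mixed partials to rewrite $D_i u$ as $D_{i'}[D_N^{k-j}u]$, and then observe that the $\binom{k}{j}$ choices of position pattern each contribute the identical inner sum $\sum_{i'\in I_{N-1}^j}(D_{i'}[D_N^{k-j}u])^2$. The paper formalizes the last step via an explicit bijection $I_{N-1}^{j;k}(\sigma)\to I_{N-1}^j$ for each position pattern $\sigma$, but this is exactly the bookkeeping you describe.
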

\par
\begin{proof}
The conclusion is trivial if $k=0$; 
we may assume $k\in\mathbb{N}$ below.  
Define
$$
j_{N-1}[i]
=
\sharp\{n\in\{1,2,\ldots,k\};\,
i_n\in I_{N-1}\}
\text{\;\;for\;}
i=(i_1,i_2,\ldots,i_k)\in I_N^k
$$
and
$$
I_{N-1}^{j;k}
=
\{i\in I_N^k;\,
j_{N-1}[i]=j\}
\text{\;\;for\;}
j\in\{0,1,\ldots,k\}.
$$
For $i=(i_1,i_2,\ldots,i_k)\in I_{N-1}^{j;k}$, 
let 
$$
(n'_1[i],n'_2[i],\ldots,n'_j[i])
$$
be all the $n$'s listed in ascending order 
such that $i_n\in I_{N-1}$, 
and let
$$
(\tilde n_1[i],\tilde n_2[i],\ldots,\tilde n_{k-j}[i])
$$
be all the $n$'s listed in ascending order 
such that $i_n=N$. 
If we define
$$
i'_{N-1}[i]
=
(i_{n'_1[i]},i_{n'_2[i]},\ldots,i_{n'_j[i]})
,\;
\tilde i_N[i]
=
(i_{\tilde n_1[i]},i_{\tilde n_2[i]},\ldots,i_{\tilde n_{k-j}[i]}),
$$
then
$$
i'_{N-1}[i]\in I_{N-1}^j
,\;
\tilde i_N[i]=(N,N,\ldots,N)\in\{N\}^{k-j}
\text{\;\;for\;}
i\in I_{N-1}^{j;k}
$$
and
$$
D_iu(x)
=
D_{i'_{N-1}[i]}
D_{\tilde i_N[i]}u(x)
=
D_{i'_{N-1}[i]}
[D_N^{k-j}u](x)
\text{\;\;for\;}
x\in\Omega,\,
i\in I_{N-1}^{j;k}.
$$
We next define
\begin{gather*}
\Sigma_k^{k-j}
=
\{
\sigma=(\sigma_1,\sigma_2,\ldots,\sigma_{k-j})
\in\{1,2,\ldots,k\}^{k-j};\,
\sigma_1<\sigma_2<\cdots<\sigma_{k-j}\}
\\
\text{for\;}
j\in\{0,1,\ldots,k-1\}.
\end{gather*}
For $\sigma\in\Sigma_k^{k-j}$, 
define
$$
I_{N-1}^{j;k}
(\sigma)
=\{i\in I_{N-1}^{j;k}
;\,
(\tilde n_1[i],\tilde n_2[i],\ldots,\tilde n_j[i])
=\sigma\}.
$$
Since the mapping 
$
I_{N-1}^{j;k}(\sigma)
\ni
i\mapsto i'_{N-1}[i]
\in I_{N-1}^j
$ 
is bijective, 
we have
\begin{align*}
\sum_{i\in I_{N-1}^{j;k}(\sigma)}
(D_i u(x))^2
&=
\sum_{i\in I_{N-1}^{j;k}(\sigma)}
(D_{i'_{N-1}[i]}
[D_N^{k-j}u](x)
)^2
\\
&=
\sum_{i'\in I_{N-1}^j}
(D_{i'}
[D_N^{k-j}u](x)
)^2
\\
&=
\lvert
\nabla_{\!N-1}^j
[D_N^{k-j}u](x)
\rvert_{(N-1)^j}^2
\text{\;\;for\;}
x\in\Omega.
\end{align*}
Since 
$$
\sharp\Sigma_k^{k-j}
=
{k\choose k-j}
=
{k\choose j}
\text{\;\;for\;}
j\in\{0,1,\ldots,k-1\},
$$
we deduce
\begin{align*}
\lvert
\nabla_{\!N}^k
u(x)
\rvert_{N^k}^2
&=
\sum_{i\in I_N^k\setminus I_{N-1}^k}
(D_iu(x))^2
+
\sum_{i'\in I_{N-1}^k}
(D_{i'}u(x))^2
\\
&=
\sum_{j=0}^{k-1}
\sum_{\sigma\in\Sigma_k^{k-j}}
\sum_{i\in I_{N-1}^{j;k}(\sigma)}
(D_iu(x))^2
+
\lvert
\nabla_{\!N-1}^k
u(x)
\rvert_{(N-1)^k}^2
\\
&=
\sum_{j=0}^k
{k\choose j}
\lvert
\nabla_{\!N-1}^j
[D_N^{k-j}u](x)
\rvert_{(N-1)^j}^2
\text{\;\;for\;}
x\in\Omega.
\end{align*}
This completes the proof.
\end{proof}
\par
Define $e_N=(0,0,\ldots,0,1)\in\mathbb{R}^N$ and
$$
\rho_N(x)
=
\lvert x+e_N\rvert_N^2-1
\text{\;\;for\;}
x\in\mathbb{R}^N, 
$$
which becomes
$$
\rho_N(x)
=
\begin{cases}
\phi^1(x)
&
\text{for\;}
x\in\mathbb{R}\phantom{^N}
\text{\;\;if\;}
N=1,
\smallskip
\\
\lvert x'\rvert_{N-1}^2
+\phi^1(x_N)
&
\text{for\;}
x\in\mathbb{R}^N
\text{\;\;if\;}
N\in\mathbb{N}\setminus\{1\}.
\end{cases}
$$
Note that 
$$
\lvert \rho_N(x)\rvert
=
\lvert\,\lvert x\rvert_N^2+2x_N\rvert
\le
\lvert x\rvert_N^2+2\lvert x_N\rvert
\le
\lvert x\rvert_N^2+2\lvert x\rvert_N
<
\varepsilon
\text{\;\;for\;}
x\in B_{(1+\varepsilon)^{1/2}-1}^N
$$
for all $\varepsilon>0$. 
\par
\begin{prop}
\label{prop:Taylor}
Let $\varepsilon>0$ and
$$
f(t)
=
\sum_{n=0}^\infty
a_n t^n
\text{\;\;for\/\;}
-\varepsilon<t<\varepsilon
$$
be analytic, 
where\/ $\{a_n\}_{n=0}^\infty\subset\mathbb{R}$. 
\begin{itemize}
\item[{\rm (i)}]
Let $N=1$  
and $k\in\mathbb{Z}_+$. 
Then it holds 
$$
[f(\rho_1)]^{(k)}(0)
=
k!
\sum_{n=\lceil k/2\rceil}^k
2^{2n-k}
a_n
{n \choose k-n}.
$$
\item[{\rm (ii)}]
Let $N\in\mathbb{N}\setminus\{1\}$ 
and $k\in\mathbb{Z}_+$. 
Then it holds 
\begin{align*}
{}&
\lvert
\nabla_{\!N}^k
[f(\rho_N)](0)
\rvert_{N^k}^2
\\
&=
k!
\sum_{l=0}^{\lfloor k/2\rfloor}
\frac{(k-2l)!}
     {(2l)!}
\left(
\sum_{n=\lceil k/2\rceil}^{k-l}
2^{2n-k}
a_n
{n \choose k-n}
{k-n\choose l}
\right)^2
\gamma_{N-1}^{2l,2l}.
\end{align*}
\end{itemize}
\end{prop}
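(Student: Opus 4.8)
The plan is to prove both parts by expanding $f$ into its power series, substituting $\rho_N$, and differentiating term by term. The composite map $x\mapsto f(\rho_N(x))$ is real-analytic in a neighborhood of the origin, and since $f(t)=\sum_{n=0}^\infty a_n t^n$ converges on $(-\varepsilon,\varepsilon)$ while $\lvert\rho_N(x)\rvert<\varepsilon$ on $B^N_{(1+\varepsilon)^{1/2}-1}$ as noted above, its Taylor coefficients at $0$ are obtained by substituting the polynomial $\rho_N(x)$ into the series and reexpanding; consequently the derivatives of $f(\rho_N)$ at $0$ may be computed by differentiating $\sum_{n=0}^\infty a_n\rho_N(x)^n$ termwise.

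For (i), since $\rho_1=\phi^1$ we have $f(\rho_1(x))=\sum_{n=0}^\infty a_n\phi^n(x)$, so $[f(\rho_1)]^{(k)}(0)=\sum_{n=0}^\infty a_n[\phi^n]^{(k)}(0)$. Substituting the value of $[\phi^n]^{(k)}(0)$ from Proposition \ref{prop:Kronecker} (i) and observing that the factor $\chi_{[n,2n]}(k)$ restricts the summation to $\lceil k/2\rceil\le n\le k$ yields the claim immediately.

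For (ii), the central idea is to separate the variables $x'$ and $x_N$. Writing $\rho_N(x)=\lvert x'\rvert_{N-1}^2+\phi^1(x_N)$ and expanding each power by the binomial theorem gives
\[
f(\rho_N(x))=\sum_{n=0}^\infty\sum_{p=0}^n a_n\binom np\lvert x'\rvert_{N-1}^{2p}\,\phi^{n-p}(x_N),
\]
a sum of products of a function of $x'$ with a function of $x_N$. I would then apply Proposition \ref{prop:N_decomposite} to obtain
\[
\lvert\nabla_{\!N}^k[f(\rho_N)](0)\rvert_{N^k}^2=\sum_{j=0}^k\binom kj\lvert\nabla_{\!N-1}^j[D_N^{k-j}f(\rho_N)](0)\rvert_{(N-1)^j}^2.
\]
Since $D_N^{k-j}$ acts only on the $\phi^{n-p}(x_N)$ factors and $\nabla_{\!N-1}^j$ only on the $\lvert x'\rvert_{N-1}^{2p}$ factors, the $i'$-component of $\nabla_{\!N-1}^j[D_N^{k-j}f(\rho_N)](0)$ equals $\sum_{n,p}a_n\binom np D_{i'}[\lvert x'\rvert_{N-1}^{2p}](0)\,[\phi^{n-p}]^{(k-j)}(0)$.

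The crux, and the step I expect to demand the most care, is the evaluation at the origin. As $D_{i'}[\lvert x'\rvert_{N-1}^{2p}](0)$ is the $j$-th derivative of a homogeneous polynomial of degree $2p$ evaluated at $0$, it vanishes unless $2p=j$; hence only even orders $j=2l$ contribute, and for each such $j$ only the single term $p=l$ survives the inner summation, uniformly in $i'$. This lets me factor the common constant $c_l=\sum_n a_n\binom nl[\phi^{n-l}]^{(k-2l)}(0)$ out of the sum over $i'$, so that the $j=2l$ summand becomes $c_l^2\,\lvert\nabla_{\!N-1}^{2l}[\lvert x'\rvert_{N-1}^{2l}](0)\rvert_{(N-1)^{2l}}^2=c_l^2\,\gamma_{N-1}^{2l,2l}$ by Proposition \ref{prop:Kronecker} (ii) in dimension $N-1$. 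It then remains to evaluate $c_l$ through Proposition \ref{prop:Kronecker} (i) (whose factor $\chi_{[n-l,2n-2l]}(k-2l)$ produces the range $\lceil k/2\rceil\le n\le k-l$), to rewrite $\binom nl\binom{n-l}{k-n-l}=\binom n{k-n}\binom{k-n}l$ by the subset-of-a-subset identity, and to combine the numerical factors via $\binom k{2l}[(k-2l)!]^2=k!\,(k-2l)!/(2l)!$, which produces exactly the asserted formula.
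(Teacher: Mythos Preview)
Your proof is correct and follows essentially the same route as the paper: expand $f(\rho_N)$ via the binomial theorem into products of $\phi^{n-p}(x_N)$ and $\lvert x'\rvert_{N-1}^{2p}$, apply Proposition~\ref{prop:N_decomposite}, use Proposition~\ref{prop:Kronecker} to see that only even $j=2l$ with $p=l$ survive at the origin, and evaluate the remaining constants. The only cosmetic difference is that you make explicit the subset-of-a-subset identity $\binom{n}{l}\binom{n-l}{k-n-l}=\binom{n}{k-n}\binom{k-n}{l}$, which the paper applies silently.
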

\par
\begin{proof}
(i) 
It follows from the definition of $\rho_1$ and $\phi^n$ that 
\begin{gather*}
[f(\rho_1)]^{(k)}(x)
=
[f(\phi^1)]^{(k)}(x)
=
\sum_{n=0}^\infty
a_n[\phi^n]^{(k)}(x)
\\
\text{for\;}
-((1+\varepsilon)^{1/2}-1)<x<(1+\varepsilon)^{1/2}-1.
\end{gather*}
If we invoke Proposition \ref{prop:Kronecker} (i),
then we have 
$$
[f(\rho_1)]^{(k)}(0)
=
\sum_{n=0}^\infty
a_n[\phi^n]^{(k)}(0)
=
k!
\sum_{n=\lceil k/2\rceil}^k
2^{2n-k}
a_n
{n \choose k-n}.
$$
Thus, (i) is established.
\par
(ii) 
Using the binomial expansion, 
we have
\begin{align*}
f(\rho_N(x))
&=
\sum_{n=0}^\infty
a_n (\rho_N(x))^n
\\
&=
\sum_{n=0}^\infty
a_n (\lvert x'\rvert_{N-1}^2
+\phi^1(x_N)
)^n
\\
&=
\sum_{n=0}^\infty
a_n
\sum_{m=0}^n
{n\choose m}
\phi^{n-m}(x_N)
 \lvert x'\rvert_{N-1}^{2m}
\text{\;\;for\;}
x\in B_{(1+\varepsilon)^{1/2}-1}^N.
\end{align*}
Proposition \ref{prop:N_decomposite} gives 
\begin{gather*}
\begin{aligned}
{}&
\lvert
\nabla_{\!N}^k [f(\rho_N)](x)
\rvert_{N^k}^2
\\
&=
\sum_{j=0}^k
{k\choose j}
\lvert
\nabla_{\!N-1}^j 
[D_N^{k-j}[f(\rho_N)]](x)
\rvert_{(N-1)^j}^2
\\
&=
\sum_{j=0}^k
{k\choose j}
\left\lvert
\sum_{n=0}^\infty
a_n
\sum_{m=0}^n
{n\choose m}
[\phi^{n-m}]^{(k-j)}(x_N)
\nabla_{\!N-1}^j 
[\lvert x'\rvert_{N-1}^{2m}]
\right\rvert_{(N-1)^j}^2
\\
&=
\sum_{j=0}^{\lfloor k/2\rfloor}
{k\choose 2j+1}
\left\lvert
\sum_{n=0}^\infty
a_n
\sum_{m=0}^n
{n\choose m}
[\phi^{n-m}]^{(k-2j-1)}(x_N)
\nabla_{\!N-1}^{2j+1} 
[\lvert x'\rvert_{N-1}^{2m}]
\right\rvert_{(N-1)^{2j+1}}^2
\\*
&\quad{}+
\sum_{j=0}^{\lfloor k/2\rfloor}
{k\choose 2j}
\left\lvert
\sum_{n=0}^\infty
a_n
\sum_{m=0}^n
{n\choose m}
[\phi^{n-m}]^{(k-2j)}(x_N)
\nabla_{\!N-1}^{2j} 
[\lvert x'\rvert_{N-1}^{2m}]
\right\rvert_{(N-1)^{2j}}^2
\end{aligned}
\\
\text{for\;}
x\in B_{(1+\varepsilon)^{1/2}-1}^n.
\end{gather*}
Here, 
we decomposed the summation with respect to $j$ 
into two parts 
consisting odd $j$'s and even $j$'s. 
Note that Proposition \ref{prop:Kronecker} (ii) 
gives $\nabla_{\!N-1}^j 
[\lvert\,\cdot\,\rvert_{N-1}^{2m}](0)=0$ unless $j=2m$. 
It follows from Proposition \ref{prop:Kronecker} (i) 
that
$$
{n\choose l}
[\phi^{n-l}]^{(k-2l)}(0)
=
\chi^{}_{[k/2,k-l]}(n)
2^{2n-k}
(k-2l)!\,
{n \choose k-n}
{k-n\choose l}.
$$
Using these equalities, 
we have
\begin{align*}
{}&
\lvert
\nabla_{\!N}^k [f(\rho_N)](0)
\rvert_{N^k}^2
\\
&=
\sum_{l=0}^{\lfloor k/2\rfloor}
{k\choose 2l+1}
\left\lvert
\sum_{n=0}^\infty
a_n
\sum_{m=0}^n
{n\choose m}
[\phi^{n-m}]^{(k-2l-1)}(0)
\nabla_{\!N-1}^{2l+1} 
[\lvert\,\cdot\,\rvert_{N-1}^{2m}](0)
\right\rvert_{(N-1)^{2l+1}}^2
\\*
&\quad{}
+
\sum_{l=0}^{\lfloor k/2\rfloor}
{k\choose 2l}
\left\lvert
\sum_{n=0}^\infty
a_n
\sum_{m=0}^n
{n\choose m}
[\phi^{n-m}]^{(k-2l)}(0)
\nabla_{\!N-1}^{2l} 
[\lvert\,\cdot\,\rvert_{N-1}^{2m}](0)
\right\rvert_{(N-1)^{2l}}^2
\\
&=
\sum_{l=0}^{\lfloor k/2\rfloor}
{k\choose 2l}
\left\lvert
\sum_{n=l}^\infty
a_n
{n\choose l}
[\phi^{n-l}]^{(k-2l)}(0)
\nabla_{\!N-1}^{2l} 
[\lvert\,\cdot\,\rvert_{N-1}^{2l}](0)
\right\rvert_{(N-1)^{2l}}^2
\\
&=
k!
\sum_{l=0}^{\lfloor k/2\rfloor}
\frac{(k-2l)!}
     {(2l)!}
\left(
\sum_{n=\lceil k/2\rceil}^{k-l}
2^{2n-k}
a_n
{n \choose k-n}
{k-n\choose l}
\right)^2
\gamma_{N-1}^{2l,2l}.
\end{align*}
\end{proof}
\par
For $s\in\mathbb{R}$, 
define 
$$
f_s(t)
=
(1+t)^{s/2}
,\;
f_\ast(t)
=
\frac12
\log(1+t)
\text{\;\;for\;}
-1<t<1.
$$
Then the Taylor expansion formula 
(see e.g. \cite[p.\ 361]{Bers1})
immediately yields 
$$
f_s(t)
=
\sum_{n=0}^\infty
{s/2 \choose n}
t^n
,\;
f_\ast(t)
=
\sum_{n=1}^\infty
\frac{(-1)^{n-1}}
     {2n}
t^n
\text{\;\;for\;}
-1<t<1.
$$
We now prove Lemmas \ref{lem:1-dim} and \ref{lem:ind_N} 
by applying Proposition \ref{prop:Taylor}. 
First we prove Lemma \ref{lem:ind_N}. 
\par
\begin{proof}[Proof of Lemma \ref{lem:ind_N}]
Since $\lvert e_N\rvert_N=1$ and  
$$
\lvert x+e_N\rvert_N^s
=
f_s(\rho_N(x))
,\;
\log\lvert x+e_N\rvert_N
=
f_\ast(\rho_N(x))
\text{\;\;for\;}
x\in B_{2^{1/2}-1}^N,
$$
we deduce 
\begin{gather*}
\gamma_N^{s,k}
=
\lvert
\nabla_{\!N}^k
[\lvert\,\cdot\,\rvert_N^s]
(e_N)
\rvert_{N^k}^2
=
\lvert
\nabla_{\!N}^k
[\lvert\,\cdot+e_N\rvert_N^s]
(0)
\rvert_{N^k}^2
=
\lvert
\nabla_{\!N}^k
[f_s(\rho_N)]
(0)
\rvert_{N^k}^2,
\\
\ell_N^k
=
\lvert
\nabla_{\!N}^k
[\log\lvert\,\cdot\,\rvert_N]
(e_N)
\rvert_{N^k}^2
=
\lvert
\nabla_{\!N}^k
[\log\lvert\,\cdot+e_N\rvert_N]
(0)
\rvert_{N^k}^2
=
\lvert
\nabla_{\!N}^k
[
f_\ast(\rho_N)
]
(0)
\rvert_{N^k}^2.
\end{gather*}
Applying Proposition \ref{prop:Taylor} (ii), 
we obtain both the assertions (i) and (ii). 
\end{proof}
\par
Next we prove Lemma \ref{lem:1-dim}. 
\par
\begin{proof}[Proof of Lemma \ref{lem:1-dim}]
We argue as in the proof of Lemma \ref{lem:ind_N} 
with applying Proposition \ref{prop:Taylor} (i) instead of Proposition \ref{prop:Taylor} (ii) 
to obtain the assertion. 
\end{proof}
\par
We need the following proposition 
to prove Lemma \ref{lem:2m}.
\par
\begin{prop}
\label{prop:m+1/2}
For $\nu\in\mathbb{R}$ and $m\in\mathbb{Z}_+$, 
it holds 
\begin{equation}
\label{eq:ind_l}
\sum_{l=0}^m
\frac{(2l)!}
     {2^{2l}l!}
(\nu+m-l)_{m-l}
{m\choose l}
=
\left(
\nu+m+\frac12
\right)_m.
\end{equation}
\end{prop}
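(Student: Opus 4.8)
The plan is to prove the identity
$$
\sum_{l=0}^m
\frac{(2l)!}{2^{2l}l!}
(\nu+m-l)_{m-l}
{m\choose l}
=
\left(\nu+m+\tfrac12\right)_m
$$
by induction on $m$. The base case $m=0$ is trivial since both sides equal $1$. For the inductive step I would fix the convention of writing the summand in terms of familiar quantities and look for a recurrence that reduces the $m$-case to the $(m-1)$-case. The main structural obstacle is that the falling-factorial factor $(\nu+m-l)_{m-l}$ depends on $m$ in both its base and its length, so a naive shift of the summation index does not immediately expose the induction hypothesis; I expect the bulk of the work to lie in rewriting this factor.

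First I would observe that the coefficient $\tfrac{(2l)!}{2^{2l}l!}$ is itself a shifted Pochhammer symbol: one checks directly from the definition that
$$
\frac{(2l)!}{2^{2l}l!}
=
\left(\tfrac12\right)_l
\cdot l!\,\big/\,l!
=
\Big(l-\tfrac12\Big)\Big(l-\tfrac32\Big)\cdots\tfrac12
=
\left(\tfrac12\right)_l^{\!\uparrow},
$$
so the whole summand can be expressed using Pochhammer symbols with a half-integer shift. This suggests that \eqref{eq:ind_l} is an instance of a terminating hypergeometric summation of Gauss--Vandermonde type, and the cleanest route may be to recognize the sum as a Chu--Vandermonde identity after suitable normalization. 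Concretely, I would divide both sides by $(\nu+m+\tfrac12)_m$, convert each ratio of Pochhammer symbols into a quotient of Gamma functions, and verify that the resulting series is a ${}_2F_1$ evaluated at $1$ whose parameters satisfy the Chu--Vandermonde hypothesis; the closed form then follows from the standard evaluation.

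If one prefers to stay self-contained and avoid invoking hypergeometric machinery, the induction can be completed directly. The key algebraic step is the factorization
$$
(\nu+m-l)_{m-l}
=
(\nu+m-l)\,(\nu+m-l-1)_{m-l-1},
$$
which peels off the largest factor and lets one relate the $m$-sum to the $(m-1)$-sum with $\nu$ replaced by $\nu+1$. Splitting the leading factor as $(\nu+m-l)=(\nu+m+\tfrac12)-(l+\tfrac12)$ produces two sums: the first is recognizable, up to the factor $(\nu+m+\tfrac12)$, as the inductive expression, while the second, after using $(l+\tfrac12)\tfrac{(2l)!}{2^{2l}l!}=\tfrac{(2l+1)!}{2^{2l+1}l!}$ to absorb the factor, telescopes against the binomial identity ${m\choose l}\tfrac{m-l}{\,} = m{m-1\choose l}$.

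The main obstacle I anticipate is bookkeeping the index shifts so that both resulting sums genuinely match $(\nu+m+\tfrac12)_m$ after applying the induction hypothesis at argument $\nu+1$ and order $m-1$; the combinatorial identities needed (the binomial recurrence and the half-integer Pochhammer manipulation above) are elementary, but their combination must be arranged so that the $(\nu+m-\tfrac12)$ factor that the hypothesis supplies lines up correctly with the $(\nu+m+\tfrac12)$ factor on the right-hand side. Once the two pieces are shown to assemble into $(\nu+m+\tfrac12)(\nu+m+\tfrac12-1)_{m-1}=(\nu+m+\tfrac12)_m$, the induction closes and the proof is complete.
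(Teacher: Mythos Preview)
Your overall inductive plan and the Chu--Vandermonde alternative are both sound in principle, and in fact the paper's proof \emph{also} proceeds by induction on $m$ and uses the same two ingredients you single out: the factorization $(\nu+m-l)_{m-l}=(\nu+m-l)(\nu+m-l-1)_{m-l-1}$ and the half-integer identity $(l+\tfrac12)\tfrac{(2l)!}{2^{2l}l!}=\tfrac{(2(l+1))!}{2^{2(l+1)}(l+1)!}$. The hypergeometric route (recognizing the sum as ${}_2F_1(-m,\tfrac12;-\nu-m;1)$ and applying Chu--Vandermonde for generic $\nu$, then extending by polynomial identity) is a genuinely different and more conceptual argument that the paper does not use.

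However, your self-contained inductive sketch contains a real gap. First, the remark that the factorization exposes the $(m-1)$-sum ``with $\nu$ replaced by $\nu+1$'' is incorrect: $(\nu+m-l-1)_{m-l-1}$ is exactly $(\nu+(m-1)-l)_{(m-1)-l}$, i.e.\ the same $\nu$. More importantly, after your splitting $(\nu+m-l)=(\nu+m+\tfrac12)-(l+\tfrac12)$ the first piece is
\[
(\nu+m+\tfrac12)\sum_{l=0}^{m-1}\frac{(2l)!}{2^{2l}l!}\,(\nu+m-1-l)_{m-l-1}\binom{m}{l},
\]
which carries $\binom{m}{l}$, \emph{not} $\binom{m-1}{l}$, so it is not the induction hypothesis. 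The claimed telescoping of the second piece via $\binom{m}{l}(m-l)=m\binom{m-1}{l}$ does not repair this, since no factor $(m-l)$ is present. The paper's argument reverses your order of operations: it applies Pascal's rule $\binom{m}{l}=\binom{m-1}{l}+\binom{m-1}{l-1}$ \emph{first}, producing two sums already indexed by $\binom{m-1}{\cdot}$; after an index shift in the second, the summands combine via $(\nu+m-l)+(l+\tfrac12)=\nu+m+\tfrac12$ to yield exactly $(\nu+m+\tfrac12)$ times the $(m-1)$-sum, and the hypothesis applies directly. Inserting Pascal's identity at the outset is the missing step in your plan.
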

\par
\begin{proof}
We use an induction on $m$. 
When $m=0$, 
(\ref{eq:ind_l}) trivially holds. 
Fix $m\in\mathbb{N}$ and assume 
that (\ref{eq:ind_l}) holds for $m-1$, 
that is, 
\begin{equation}
\label{eq:****}
\sum_{l=0}^{m-1}
\frac{(2l)!}
     {2^{2l}l!}
(\nu+m-l-1)_{m-l-1}
{m-1\choose l}
=
\left(
\nu+m-\frac12
\right)_{m-1}.
\end{equation}
We use the following identities 
\begin{gather*}
{m\choose l}
=
{m-1\choose l}
+
{m-1\choose l-1}
\text{\;\;for\;}
l\in\mathbb{N}
,
\\
(\nu+m-l)_{m-l}
=
(\nu+m-l)
(\nu+m-l-1)_{m-l-1}
\text{\;\;for\;}
l\in\mathbb{Z}_+.
\end{gather*}
Then we have
\begin{align*}
{}&
\sum_{l=0}^m
\frac{(2l)!}
     {2^{2l}l!}
(\nu+m-l)_{m-l}
{m\choose l}
\\
&=
(\nu+m)_m
+
\sum_{l=1}^{m-1}
\frac{(2l)!}
     {2^{2l}l!}
(\nu+m-l)_{m-l}
{m-1\choose l}
\\*
&\quad{}
+
\sum_{l=1}^{m-1}
\frac{(2l)!}
     {2^{2l}l!}
(\nu+m-l)_{m-l}
{m-1\choose l-1}
+
\frac{(2m)!}
     {2^{2m}m!}
\\
&=
\sum_{l=0}^{m-1}
\frac{(2l)!}
     {2^{2l}l!}
(\nu+m-l)
(\nu+m-l-1)_{m-l-1}
{m-1\choose l}
\\*
&\quad{}
+
\sum_{l=0}^{m-1}
\frac{(2(l+1))!}
     {2^{2(l+1)}(l+1)!}
(\nu+m-l-1)_{m-l-1}
{m-1\choose l}
\\
&=
\left(
\nu+m+\frac12
\right)
\sum_{l=0}^{m-1}
\frac{(2l)!}
     {2^{2l}l!}
(\nu+m-l-1)_{m-l-1}
{m-1\choose l}.
\end{align*}
Applying (\ref{eq:****}),
we have
$$
\sum_{l=0}^m
\frac{(2l)!}
     {2^{2l}l!}
(\nu+m-l)_{m-l}
{m\choose l}
=
\left(
\nu+m+\frac12
\right)_m,
$$
which shows that (\ref{eq:ind_l}) holds also for $m$. 
The calculation above works also for $m=1$; 
as usual, 
we regard any empty sum as 0. 
Thus (\ref{eq:ind_l}) is proved. 
\end{proof}
\par
We now prove Lemma \ref{lem:2m}.
\par
\begin{proof}[Proof of Lemma \ref{lem:2m}]
We use an induction on $N$. 
First, 
(\ref{eq:1-dim_result}) gives 
$$
\gamma_1^{2m,2m}=((2m)!)^2
\text{\;\;for\;}
m\in\mathbb{Z}_+.
$$
Meanwhile we have
$$
2^{2m}
m!\,
(2m)!\,
\left(
m-\frac12
\right)_m
=
2^m 
m!\,
(2m)!
\prod_{j=1}^m (2j-1)
=
((2m)!)^2
\text{\;\;for\;}
m\in\mathbb{Z}_+.
$$
The equality above 
is valid also for $m=0$; 
as usual, 
we regard any empty product as 1. 
Thus (\ref{eq:2m}) holds for $N=1$. 
Fix $N\in\mathbb{N}\setminus\{1\}$ and assume 
that (\ref{eq:2m}) holds for $N-1$, 
that is, 
\begin{equation}
\label{eq:++++}
\gamma_{N-1}^{2m,2m}
=
2^{2m}
m!\,
(2m)!\,
\left(
\frac{N-3}
     {2}
+m
\right)_m
\text{\;\;for\;}
m\in\mathbb{Z}_+.
\end{equation}
It follows from Lemma \ref{lem:ind_N} that 
\begin{align*}
\gamma_N^{2m,2m}
&=
(2m)!
\sum_{l=0}^m
\frac{(2(m-l))!}
     {(2l)!}
{m \choose l}^2
\gamma_{N-1}^{2l,2l}
\\
&=
(2m)!
\sum_{l=0}^m
\frac{(2l)!}
     {(2(m-l))!}
{m \choose l}^2
\gamma_{N-1}^{2(m-l),2(m-l)}.
\end{align*}
Applying (\ref{eq:++++}) and Proposition \ref{prop:m+1/2}, 
we have
\begin{align*}
\gamma_N^{2m,2m}
&=
(2m)!
\sum_{l=0}^m
(2l)!\,
{m \choose l}^2
2^{2(m-l)}
(m-l)!\,
\left(
\frac{N-3}
     {2}
+m-l
\right)_{m-l}
\\
&=
2^{2m} 
m!\, 
(2m)!
\sum_{l=0}^m
\frac{(2l)!}
     {2^{2l}l!}
\left(
\frac{N-3}
     {2}
+m-l
\right)_{m-l}
{m\choose l}
\\
&=
2^{2m} 
m!\, 
(2m)!\,
\left(
\frac N2+m-1
\right)_m
\text{\;\;for\;}
m\in\mathbb{Z}_+,
\end{align*}
which shows that (\ref{eq:2m}) holds also for $N$. 
Thus (\ref{eq:2m}) is proved. 
\end{proof}
\par
Thus we have proved Theorem \ref{thm:ind}. 
\section{Proof of Theorem \ref{thm:2-dim}}
\label{sect:prf_3}
\par
We can easily prove 
Theorem \ref{thm:2-dim} 
by applying Theorem \ref{thm:derivative_log_abs}. 
\par
\begin{proof}[Proof of Theorem \ref{thm:2-dim}]
Let $k\in\mathbb{N}$. 
For $u\in C^{k+1}(\mathbb{R}^N\setminus\{0\})$, 
a direct calculation shows  
\begin{equation}
\label{eq:Laplacian1}
\Delta_N
[
\lvert
\nabla_{\!N}^{k-1}
u
\rvert_{N^{k-1}}^2
]
=
2
\lvert
\nabla_{\!N}^k
u
\rvert_{N^k}^2
+
2
(
\nabla_{\!N}^{k-1}
u
,
\nabla_{\!N}^{k-1}
[
\Delta_N
u
]
)_{N^{k-1}}
\text{\;\;on\;}
\mathbb{R}^N\setminus\{0\}, 
\end{equation}
where
$\Delta_N=D_1^2+D_2^2+\cdots+ D_N^2$ 
is the usual Laplacian on $\mathbb{R}^N$. 
We see that for $\nu\in\mathbb{R}$, 
\begin{equation}
\label{eq:Laplacian2}
\Delta_N[\lvert x\rvert_N^\nu]
=
\nu(\nu+N-2)
\lvert x\rvert_N^{\nu-2}
\text{\;\;for\;}
x\in\mathbb{R}^N\setminus\{0\}. 
\end{equation}
\par
(i) 
It follows from (\ref{eq:Laplacian1}) 
and (\ref{eq:Laplacian2}) that 
$$
2
\left\lvert
\nabla_{\!N}^k
\left[
\frac{1}
     {\lvert x\rvert_N^{N-2}}
\right]
\right\rvert_{N^k}^2
=
\Delta_N
\left[
\left\lvert
\nabla_{\!N}^{k-1}
\left[
\frac{1}
     {\lvert x\rvert_N^{N-2}}
\right]
\right\rvert_{N^{k-1}}^2
\right]
\text{\;\;for\;}
x\in\mathbb{R}^N\setminus\{0\}. 
$$
By virtue of Theorem \ref{thm:derivative_log_abs} 
and (\ref{eq:Laplacian2}), 
we deduce 
\begin{align*}
{}&
2\gamma_N^{-(N-2),k}
\frac{1}
     {\lvert x\rvert_N^{2(N+k-2)}}
\\
&=
\gamma_N^{-(N-2),k-1}
\Delta_N
\left[
\frac{1}
     {\lvert x\rvert_N^{2(N+k-3)}}
\right]
\\
&=
2(N+k-3)(N+2k-4)
\gamma_N^{-(N-2),k-1}
\frac{1}
     {\lvert x\rvert_N^{2(N+k-2)}}
\text{\;\;for\;}
x\in\mathbb{R}^N\setminus\{0\}, 
\end{align*}
which implies
$$
\gamma_N^{-(N-2),k}
=
(N+k-3)
\left(
\frac N2+k-2
\right)
\gamma_N^{-(N-2),k-1}. 
$$
The desired conclusion now follows inductively 
since $\gamma_N^{-(n-2),0}=1$. 
\par
(ii) 
Note that
$$
\Delta_2[\log\lvert x\rvert_2]
=
0
\text{\;\;for\;}
x\in\mathbb{R}^2\setminus\{0\}. 
$$
We argue as in (i) to deduce 
$$
\ell_2^k=2(k-1)^2\ell_2^{k-1}.
$$ 
The desired conclusion now follows inductively 
since $\ell_2^1=1$. 
\end{proof}
\par
\section*{Acknowledgment}
The first author is supported by 
the JSPS Global COE Program, 
Kyoto University. 
The third author was supported
by Grant-in-Aid for Young Scientists (B) (No.21740104)
Japan Society for the Promotion of Science.



\end{document}